\newtheorem{theorem}{Theorem}[section]
\newtheorem*{theorem*}{Theorem}
\newtheorem{proposition}[theorem]{Proposition}
\newtheorem{lemma}[theorem]{Lemma}
\newtheorem{corollary}[theorem]{Corollary}
\theoremstyle{definition}
\theoremstyle{example}
\DeclareMathOperator{\reals}{\mathbb{R}}
\DeclareMathOperator{\naturals}{\mathbb{N}}
\newcommand{\norm}[1]{\left\lVert#1\right\rVert}
\title{Rigidity of the unstable foliation}
\author{Sergi Burniol Clotet}
\begin{document}
	
	\maketitle
\begin{abstract}
%	Anosov flows are one of the central examples of hyperbolic and chaotic dynamical systems. Associated to them are the stable and unstable foliations, whose leaves contract and expand exponentially along the flow. 
	We establish a rigidity result for the unstable foliations of transitive Anosov flows on 3-manifolds: if the unstable foliations of two such flows are equivalent (that is, if there exists a homeomorphism mapping one foliation to the other), then the flows are topologically conjugate up to a constant change of time. This result partially generalizes earlier rigidity theorems for horocyclic flows on compact surfaces of negative curvature, originating in the work of Ratner. In that setting, it is known that equivalence of unstable foliations implies that the underlying surfaces are homothetic.
	
%	In this setting, there are previous rigidity results due to Ratner, Marcus, and Abe, among others. The latter showed that if the unstable foliations of two such flows are equivalent, then the underlying surfaces are homothetic.
\end{abstract}
%	
%Reference: Thierry Barbot
%Ver articulo de Abe sobre factors...
%Feldman- Ornstein do the contact case

%escribo la pregunta aca??
\section{Introduction}

Understanding whether two dynamical systems are equivalent is a classical question in the field. Several authors have addressed this question in the case of horocyclic flows. Ratner established a rigidity result for horocyclic flows of finite-volume hyperbolic surfaces in a measurable setting \cite{Ratner82}. Later, Marcus obtained a topological rigidity result for horocyclic flows of compact surfaces with variable negative curvature \cite{Marcus83}. Abe proved a stronger statement using different methods and Otal's marked length spectrum rigidity theorem \cite{Abe95, Otal90}. Other works related to this matter are \cite{Ratner1982Factors,Ratner1983,FeldmanOrnstein1987,Abe1991RigidityFactors}.

Two foliations are called equivalent if there exists an homeomorphism that maps leaves of one foliation to leaves of the other. A precise formulation of topological rigidity for horocyclic flows is the following:
\begin{theorem}\cite{Marcus83,Abe95}\label{thmAbe}
	Let $(S,g),(S',g')$ be two negatively curved compact surfaces. If the unstable foliations on $T^1S$ and $T^1 S'$ are equivalent, then the two surfaces $(S,g)$ and $(S',g')$ are homothetic.
\end{theorem}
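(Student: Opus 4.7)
The strategy I propose, following Abe, is to extract from the foliation equivalence a $\Gamma$-equivariant boundary homeomorphism, show that it preserves the marked length spectrum up to a uniform positive scalar, and conclude via Otal's rigidity theorem. This reduces a purely topological hypothesis on the unstable foliations to a metric rigidity statement.

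First, the homeomorphism $h$ induces an identification $\Gamma := \pi_1(S) \cong \pi_1(S')$ and lifts to a $\Gamma$-equivariant foliation equivalence $\tilde h: T^1 \tilde S \to T^1 \tilde S'$. In the universal cover each strong unstable leaf has a unique endpoint at $-\infty$ in $\partial \tilde S$, and the weak unstable foliation groups together strong unstable leaves sharing the same endpoint. The first key step is a lemma asserting that $\tilde h$ maps weak unstable leaves to weak unstable leaves; this requires a purely topological characterization of weak unstable equivalence in terms of the strong unstable foliation (for instance, via an asymptotic condition on sequences of nearby strong unstable leaves in $T^1 \tilde S$). Granting this, we obtain a $\Gamma$-equivariant boundary homeomorphism $\phi: \partial \tilde S \to \partial \tilde S'$.

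Second, I would promote $\phi$ to an identification of marked length spectra up to a scalar. The leaf space of the strong unstable foliation in $T^1 \tilde S$ is naturally $\partial \tilde S \times \mathbb{R}$, the second coordinate being the Busemann level at the endpoint; the geodesic flow translates this coordinate and $\Gamma$ acts via the Busemann cocycle $\beta_\xi(\gamma)$. The leaf-space map induced by $\tilde h$ has the form $(\xi, s) \mapsto (\phi(\xi), \psi(\xi, s))$ and intertwines the $\Gamma$-actions; comparing Busemann cocycles on the two sides yields a Holder-continuous equivariant cocycle identity on $\partial \tilde S$, which a Livsic-type argument applied at periodic orbits of the geodesic flow should reduce to a single positive constant $\lambda$. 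This gives $\ell_{g'}(\gamma) = \lambda \, \ell_g(\gamma)$ for every $\gamma \in \Gamma$, and Otal's theorem \cite{Otal90} then provides an isometry between $(S, \lambda^2 g)$ and $(S', g')$, proving the homothety.

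The main technical obstacles I foresee are twofold. First, the preservation of the weak unstable foliation under $\tilde h$ requires a nontrivial topological argument in dimension three given only the strong unstable foliation (a priori many $2$-dimensional foliations could saturate $H^u$). Second, the constancy of the scaling factor $\lambda$ requires matching the two Busemann cocycles up to cohomology and then collapsing the resulting coboundary via Livsic on the geodesic flow. One should also pay attention to the $S^1$-fiber of $T^1 S$ when descending the isomorphism on $\pi_1(T^1 S)$ to surface groups, and to the possible reversal of the $\mathbb{R}$-coordinate along horocycle leaves that a purely topological equivalence may introduce.
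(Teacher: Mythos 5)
The paper does not reprove Theorem \ref{thmAbe}; it is quoted from Marcus and Abe, and the introduction only records the strategy: first establish a topological conjugacy of geodesic flows up to a constant time change, then invoke Otal's marked length spectrum rigidity. Your outline aims at the same endpoint via Otal, but the decisive middle step is left as a genuine gap. You correctly identify that everything hinges on showing the lifted equivalence $\tilde{h}$ sends weak unstable leaves to weak unstable leaves, and you flag it as requiring a nontrivial topological argument, but you supply neither a characterization of the weak unstable foliation in terms of the strong one nor a mechanism to exclude the many other codimension-one foliations saturating $\mathcal{W}^u$. This is precisely the heart of the matter: Marcus and Abe prove it using structure specific to geodesic (or contact) flows, and the main novelty of the present paper, Proposition \ref{key}, is a proof of this very step for arbitrary transitive Anosov flows on $3$-manifolds, showing that its failure would force joint integrability of $\mathcal{W}^s_g$ and $\mathcal{W}^u_g$ and hence a suspension, contradicting mixing. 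Without that step the boundary map $\phi$ is never defined and the second stage cannot begin.

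Your second stage also needs revision. Livsic's theorem relates a cocycle over a single hyperbolic system to a coboundary; here you want to compare Busemann cocycles across two systems before any conjugacy between them is available, so the periodic-orbit closing argument does not apply directly. The cleaner route, once center-unstable preservation is established, is the one Abe uses and the paper reproduces: write $\phi(f_t(x)) = h^-_{t_3(x,t)}\, g_{t_2(x,t)}\, \phi(x)$, use minimality of $\mathcal{W}^u_f$ to show $t_2(x,t) = \bar t_2(t)$ is independent of $x$, derive additivity $\bar t_2(t+t') = \bar t_2(t) + \bar t_2(t')$ by iterating the relation on center unstable leaves, and conclude $\bar t_2(t) = \lambda t$ by continuity. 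This produces the topological conjugacy between $f_t$ and $g_{\lambda t}$ directly, rescales the marked length spectrum by $\lambda$, and Otal finishes the argument. Your remarks about the $S^1$-fiber of $T^1 S$ and the possible reversal of orientation along leaves are correct but secondary concerns.
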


In this article, we establish a rigidity result in the setting of transitive Anosov flows:
\begin{theorem}
	Let $f_t, g_t$ be two $C^1$ transitive Anosov flows on a compact $3$-manifold $M$. Assume that the strong unstable foliations $\mathcal{W}^u_f$ of $f_t$ and $\mathcal{W}^u_g$ of $g_t$ are equivalent. Then there exists $\lambda>0$ such that the flows $f_t$ and $g_{\lambda t}$ are topologically conjugate.
\end{theorem}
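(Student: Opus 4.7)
The plan is to promote the leaf-preserving homeomorphism $h$ through a sequence of refinements: first from the strong unstable foliation to the weak unstable foliation, then to the orbits of the flow, and finally to a genuine topological conjugacy up to a constant time change.

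The first step is to show that $h$ sends $\mathcal{W}^{wu}_f$ to $\mathcal{W}^{wu}_g$. The weak unstable leaves are the $2$-dimensional sets obtained by flowing strong unstable leaves, and the task is to characterize them using only the foliation $\mathcal{W}^u_f$. My expectation is that, after lifting to the universal cover $\tilde M$, two strong unstable leaves $\tilde W^u(\tilde x)$ and $\tilde W^u(\tilde y)$ lie in the same weak unstable leaf if and only if they satisfy a suitable asymptotic / bounded-transverse-distance relation that can be read off the foliation $\tilde{\mathcal{W}}^u_f$ alone; transitivity and the Anosov structure should rule out spurious coincidences. Because $h$ is a homeomorphism of the compact manifold $M$ sending $\mathcal{W}^u_f$ to $\mathcal{W}^u_g$ leafwise, its lift preserves this intrinsic relation and hence descends to a leaf-preserving map $\mathcal{W}^{wu}_f \to \mathcal{W}^{wu}_g$.

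Within each weak unstable leaf $L$, which is topologically either a plane or a cylinder, the flow orbits form a $1$-dimensional foliation transverse to $\mathcal{W}^u_f|_L$. On cylindrical leaves the generator of $\pi_1(L)\cong\mathbb{Z}$ singles out the closed flow orbit as the unique essential closed transversal to $\mathcal{W}^u_f|_L$; density of periodic orbits (from transitivity) and continuity then propagate the identification to every weak unstable leaf. Consequently $h$ maps orbits of $f_t$ to orbits of $g_t$, giving an orbit equivalence and a continuous cocycle $\tau(x,t)$ with $h(f_t(x)) = g_{\tau(x,t)}(h(x))$. The additional fact that $h$ preserves the \emph{strong} unstable foliations, not just the orbits, imposes a compatibility between the expansion rates along $\mathcal{W}^u_f$ and $\mathcal{W}^u_g$; a Livshits-type cohomological argument on periodic orbits should collapse $\tau$ to $\tau(x,t)=\lambda t$ for a single positive constant $\lambda$, finishing the proof. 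The main obstacle is the first step: many $2$-dimensional foliations can saturate a given $1$-dimensional one, so singling out $\mathcal{W}^{wu}_f$ intrinsically from $\mathcal{W}^u_f$ in a way $h$ must respect is exactly where the Anosov and transitivity hypotheses have to be put to work.
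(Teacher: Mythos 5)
Your outline (strong unstable $\Rightarrow$ weak unstable $\Rightarrow$ orbit equivalence $\Rightarrow$ conjugacy) matches the paper's high-level skeleton, but the crucial step is exactly the one you defer, and the mechanism you sketch for it does not work.

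The paper's engine is a dichotomy. By Plante's alternative, a transitive Anosov flow on a $3$-manifold is either topologically mixing with minimal $\mathcal{W}^s$ and $\mathcal{W}^u$, or a constant-time suspension of an Anosov diffeomorphism. The two cases are treated by entirely different arguments, and you cannot avoid splitting them, because your proposed intrinsic characterization of the weak unstable foliation fails outright in the suspension case: in the universal cover of a suspension, two strong unstable leaves lying in the same fiber but in distinct weak unstable leaves are parallel and remain at bounded Hausdorff distance, so ``bounded transverse distance'' does not single out the weak unstable saturation. The suspension case is instead handled in the paper by a direct topological argument: the closure of a strong unstable leaf is a global cross-section $S$, $\phi$ must carry $S$ to the analogous section $S'$ for $g_t$, and the monodromies (first-return maps) are then conjugate, which passes to the suspension flows.

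In the topologically mixing case the paper does not attempt to characterize $\mathcal{W}^{cu}_f$ intrinsically from $\mathcal{W}^u_f$ at all. It argues by contradiction: if $\phi$ fails to send $\mathcal{W}^{cu}_f$ to $\mathcal{W}^{cu}_g$, then one can manufacture (by pushing the surfaces $\psi(R_n)$ and $\varphi(R_n)$ forward by $g_{-T_n}$ and passing to a limit) a local surface simultaneously foliated by strong stable and strong unstable leaves of $g_t$. Joint integrability of $\mathcal{W}^s_g$ and $\mathcal{W}^u_g$ then forces $g_t$ to be a constant-time suspension, contradicting topological mixing. This is a fundamentally different idea from a universal-cover asymptotic relation, and it is where all the Anosov structure (local product structure, the Margulis/Marcus parametrization with the scaling law $h^\pm_s\circ g_t = g_t\circ h^\pm_{se^{\mp ht}}$, and minimality) is actually used. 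You correctly flagged this first step as ``the main obstacle,'' but the proof you gesture at is the wrong one.

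The final step also does not go through as you describe. Once $\phi$ is known to preserve the center unstable foliations, you do get an orbit-equivalence time-change, but ``Livshits on periodic orbits'' does not collapse it to $\lambda t$: a purely topological conjugacy of one-dimensional expansions carries no information about expansion rates, so preserving $\mathcal{W}^u$ imposes no periodic-data constraint of the type Livshits would exploit. The paper instead shows, using minimality of $\mathcal{W}^u_g$, that the time component $t_2(x,t)$ is independent of $x$, and then that $\bar t_2$ is additive, hence linear by continuity; the conjugacy $\psi$ is finally obtained as the uniform limit of $g_{-\lambda T}\circ\phi\circ f_T$. Similarly, your identification of flow orbits inside a weak unstable leaf via ``the unique essential closed transversal'' on cylindrical leaves is not well-posed: there are many essential closed transversals to $\mathcal{W}^u_f\vert_L$, and singling out the closed flow orbit already presupposes access to the flow. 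In short, the places you label as plausible or expected are precisely where a genuinely new argument is needed, and the paper supplies one that your sketch does not anticipate.
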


We observe that topological conjugacy is not persistent under perturbation in the space of pairs of Anosov flows. In particular, given an Anosov flow, a nonconstant change of time will produce nonequivalent unstable foliations. Hence the moduli space of nonequivalent unstable foliations is infinite-dimensional. The behavior of the unstable foliation contrasts with the behavior of the Anosov flow, since two $C^1$-close Anosov are orbit equivalent. Barbot showed that two transitive Anosov flows are orbit equivalent if and only if their center unstable foliations are equivalent \cite{Barbot95}. A statement in this direction had already been announced by Brunella in \cite{Brunella92}.

A natural question is whether the regularity of the conjugacy can be improved, leading to a stronger form of rigidity. Recent work by Gogolev, Leguil, and Rodriguez-Hertz shows that, under suitable hypotheses, a topological conjugacy between Anosov flows can in fact be upgraded to higher regularity \cite{GogolevRH,GogolevLeguilRH}.

%comment about topological conj + many nonequivalent 

In the proof of Theorem \ref{thmAbe}, one first establishes the topological conjugacy between geodesic flows and then invokes marked length spectrum rigidity. Existing proofs of the topological conjugacy rely on properties that are highly specific to geodesic flows and extend at most to contact Anosov flows. The novelty in this paper is an argument that applies to any transitive Anosov flow on a $3$-manifold.

The proof is divided into two cases. According to Plante's alternative an Anosov flow is either a constant time suspension or topologically mixing. The first case is dealt with a topological argument. The second one requires deeper dynamical arguments and relies on the Anosov structure, namely it uses the non-integrability of the stable and the unstable foliations.

%comment problems in abe marcus
%contact case feldman ornstein
%comment higher dimension
%comment gogolev leguil RH ( apparently on arxiv)
%comment integrability problem
%organization?
\section*{Acknowledgement}

The question of rigidity of the unstable foliation arised in conversations with Matilde Martínez and Rafael Potrie. The author is greatly thankful for the discussions and comments on the paper.

\section{Background}

Let $M$ be a smooth compact connected Riemannian manifold. A $C^1$ non-singular flow $f_t:M\to M, \, t\in \reals$, is Anosov if there exists a $Df_t$-invariant continuous splitting $TM = E^s \oplus \reals X \oplus E^u$, where $X = \frac{d f_t}{dt}|_{t=0}$ is the vector field associated to $f_t$, and the bundles $E^s$ and $E^u$ are uniformly contracted and expanded, respectively, meaning that there exist constants $C>0$ and $0<\lambda <1$ such that:
\begin{enumerate}
	\item for every $v\in E^s$ and $t\ge0$, $\norm{Df_t(v)} \le C\lambda^ t \norm{v}$,
	\item for every $v\in E^u$ and $t\ge0$, $\norm{Df_{-t}(v)} \le C\lambda^ {t} \norm{v}$.
\end{enumerate}

The Stable and Unstable Manifold Theorem provides submanifolds $W^s(x)$ and $W^u(x)$ for each point $x\in M$ which are tangent to $E^s$ and $E^u$, respectively, at every point. If $d$ is the Riemannian distance on $M$, the stable and the unstable manifold of $x$ can be charecterized by 
\begin{align*}
	W^s(x)&=\{y\in M \mid d(f_t(x),f_t(y)) \to 0 , \, t\to +\infty\},
	\\
	W^u(x)&=\{y\in M \mid d(f_t(x),f_t(y)) \to 0 , \, t\to -\infty\}.
\end{align*}

The center stable and unstable manifolds are defined as
\begin{align*}
	W^{cs}(x)&=\cup_{t\in \reals} f_t(W^s(x)),
	\\
	W^{cu}(x)&=\cup_{t\in \reals} f_t(W^u(x)).
\end{align*}
These four kinds of submanifolds form four continuous $f_t$-invariant foliations of $M$ that we denote by $\mathcal{W}^s_f, \, \mathcal{W}^u_f,\, \mathcal{W}^{cs}_f,\, \mathcal{W}^{cu}_f$ accordingly.

We denote by $W_{\varepsilon}^s(x)$ the ball in $W^s(x)$ of center $x\in M$ and radius $\varepsilon>0$ in the metric on $W^s(x)$ induced by the Riemannian distance $d$ on $M$. We use the same notation for the corresponding local balls in the other invariant foliations.

\begin{theorem}[Local product structure]  \label{local_prod}
	Let $f_t : M \to M$ be an Anosov flow. There exist $\varepsilon>0$ and $\delta>0$ such that for every $x,y\in M$ with $d(x,y)<\delta$, there is a unique point  
	\[
	[x,y] \;\in\; W^s_{\varepsilon}(x)\;\cap\; W^{cu}_{\varepsilon}(y).
	\]  
	Moreover, the map $(x,y)\mapsto [x,y]$ is continuous on its domain.
\end{theorem}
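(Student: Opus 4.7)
The plan is to derive the local product structure from two ingredients: uniform transversality between $E^s$ and $E^{cu} := \mathbb{R}X \oplus E^u$, and continuous dependence of the local invariant manifolds $W^s_\varepsilon, W^{cu}_\varepsilon$ on the base point. Both are standard consequences of the hyperbolic splitting and the Stable and Unstable Manifold Theorem. Existence and uniqueness of $[x,y]$ will come from a transverse intersection argument between two $C^1$ disks near $x$, and continuity in $(x,y)$ from continuous dependence of these disks on their base points.

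First I would use compactness of $M$ together with continuity of the splitting $TM = E^s \oplus \mathbb{R}X \oplus E^u$ to obtain a uniform lower bound $\theta_0 > 0$ on the angle between $E^s(x)$ and $E^{cu}(x)$ for all $x \in M$. By the Stable and Unstable Manifold Theorem, there exists $\varepsilon_0 > 0$ such that for every $\varepsilon \le \varepsilon_0$ and every $x \in M$, the sets $W^s_\varepsilon(x)$ and $W^{cu}_\varepsilon(x)$ are $C^1$ embedded disks tangent to $E^s(x)$ and $E^{cu}(x)$ at $x$, admitting $C^1$ parametrizations $h^s_x, h^{cu}_x$ that depend continuously on the base point in a uniform way. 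Shrinking $\varepsilon_0$ further, one can also ensure that $W^s_{\varepsilon_0}(x) \cap W^{cu}_{\varepsilon_0}(x) = \{x\}$ for every $x$.

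The main step is to fix $\varepsilon \le \varepsilon_0$ and write the intersection condition, in a chart around $x$ (say the one given by $\exp_x$), as the equation
\[
F_{x,y}(u,w) := h^s_x(u) - h^{cu}_y(w) = 0, \qquad (u,w) \in E^s(x) \times E^{cu}(y).
\]
For $y = x$ this has the unique solution $(0,0)$, and its linearization at that point sends $(u,w)$ to $u - w \in E^s(x) \oplus E^{cu}(x) = T_xM$, which is an isomorphism thanks to $\theta_0$-transversality. The implicit function theorem then produces a unique small solution $(u(x,y), w(x,y))$ for $(x,y)$ in a neighborhood of the diagonal, and $[x,y] := h^s_x(u(x,y))$ is the desired point. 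Uniformity of $\varepsilon, \delta$ then follows from compactness of $M$ combined with the uniform nondegeneracy supplied by $\theta_0$, while continuity of $(x,y) \mapsto [x,y]$ comes from continuous dependence of the parametrizations on the base point.

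The main subtlety is that although each individual leaf is $C^1$, the foliations $\mathcal{W}^s_f$ and $\mathcal{W}^{cu}_f$ themselves are only continuous (typically Hölder) in the transverse direction; hence the implicit function theorem must be applied in the intrinsic variables $(u,w)$ at each pair of base points, and the continuity of the solution in $(x,y)$ must be deduced separately from uniform $C^0$-continuous dependence of $h^s_x, h^{cu}_y$ on those base points. Global uniqueness of the intersection inside $W^s_\varepsilon(x) \cap W^{cu}_\varepsilon(y)$ (rather than just local uniqueness near $x$) requires a separate geometric argument using the uniform angle bound and the $C^1$-graph nature of the local disks.
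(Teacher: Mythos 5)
The paper does not prove Theorem~\ref{local_prod}; it is stated as standard background without argument or citation, being a classical consequence of hyperbolicity. Your sketch is a sound version of the standard transversality argument. The uniform angle bound via compactness, the reduction to a zero-finding problem for two $C^1$ disks in a chart, and the computation that the linearization at $(0,0)$ is $(u',w')\mapsto u'-w'$ on $E^s(x)\oplus E^{cu}(x)=T_xM$ are all correct.

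The one point I would press you on is the application of the implicit function theorem, which you correctly flag as a subtlety but do not resolve. The family $F_{x,y}$ is $C^1$ in $(u,w)$ for each fixed $(x,y)$ but only $C^0$ in $(x,y)$, so the ordinary IFT does not apply to the joint map. The clean repair is to bypass the IFT and run the parametrized Banach contraction argument directly: write $F_{x,y}(u,w)=0$ as a fixed-point equation $(u,w)=G_{x,y}(u,w)$ using the invertible linear part $A_x:(u',w')\mapsto u'-w'$, observe that $G_{x,y}$ is a contraction with a uniform contraction constant (this uses the uniform $\theta_0$, the uniform $C^1$ bounds on $h^s_\cdot,h^{cu}_\cdot$, and compactness), and then invoke continuous dependence of the fixed point on the parameter $(x,y)$, which only needs $(x,y)\mapsto G_{x,y}$ to be $C^0$. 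This simultaneously delivers existence, local uniqueness, the uniformity of $\delta$ and $\varepsilon$, and the continuity of $(x,y)\mapsto [x,y]$, so you do not need to treat continuity as a separate afterthought. Finally, the global uniqueness of the intersection in $W^s_\varepsilon(x)\cap W^{cu}_\varepsilon(y)$ follows, as you indicate, by shrinking $\varepsilon$ so that each local leaf is a Lipschitz graph over its tangent space with small Lipschitz constant relative to $\theta_0$; two such graphs of complementary dimensions meeting at angle $\ge\theta_0$ can cross at most once in a ball of that radius. With those two points made explicit the argument is complete.
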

The roles of the stable and the unstable can be reversed in the previous theorem.

We next recall Plante's alternative on transitive Anosov flows.

\begin{theorem}\cite[Theorem 1.8]{Plante72} \label{alternative}
	Let $f_t:M\to M$ be a transitive Anosov flow. Then one and only one of the following is true:
	\begin{enumerate}
		\item $f_t$ is topologically mixing and the foliations $\mathcal{W}^s$ and $\mathcal{W}^u$ are minimal.
		\item $f_t$ is the constant time suspension of an Anosov diffeomorphism on a compact $C^1$ submanifold of codimension one in $M$.
	\end{enumerate}
\end{theorem}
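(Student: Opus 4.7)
The plan is to prove the dichotomy by studying the additive subgroup $G \subset \reals$ generated by the periods $T_\gamma$ of the (dense) periodic orbits of $f_t$. Since every subgroup of $\reals$ is either dense or of the form $T_0 \mathbb{Z}$, this splits the proof into two cases, which I claim correspond precisely to the two alternatives.

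\emph{Case I: $G = T_0 \mathbb{Z}$.} Here I would construct a global cross-section by a Liv\v{s}ic coboundary argument. Consider the cocycle $c(t,x) = t \bmod T_0$ over $f_t$, which vanishes on every periodic orbit by the assumption $T_\gamma \in T_0 \mathbb{Z}$. The Liv\v{s}ic theorem for Anosov flows then produces a H\"older function $\tau \colon M \to \reals/T_0 \mathbb{Z}$ solving $\tau(f_t x) - \tau(x) \equiv t \pmod{T_0}$. The level set $N := \tau^{-1}(\{0\})$ is a compact codimension-one subset transverse to the flow with constant return time $T_0$. Smoothness along orbits combined with the relation $d\tau(X) = 1$ (with $X$ the generator of $f_t$) upgrades $\tau$ to be $C^1$, making $N$ a $C^1$ submanifold. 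The splitting $TM = E^s \oplus \reals X \oplus E^u$ restricts to $TN = E^s \oplus E^u$, so the first return map $\phi := f_{T_0}|_N$ is Anosov, and $f_t$ is the constant-time suspension of $\phi$.

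\emph{Case II: $G$ dense in $\reals$.} Here I would show $f_t$ is topologically mixing and that $\mathcal{W}^u$, $\mathcal{W}^s$ are minimal. For mixing, suppose for contradiction there is a continuous eigenfunction $\varphi \colon M \to S^1$ with $\varphi \circ f_t = e^{i\alpha t} \varphi$ for some $\alpha \neq 0$; this is the standard topological obstruction to mixing for transitive Anosov flows. Restricting to a periodic orbit $\gamma$ yields $\alpha T_\gamma \in 2\pi \mathbb{Z}$, so $G \subset (2\pi/\alpha)\mathbb{Z}$, contradicting the density of $G$. For minimality of $\mathcal{W}^u$, suppose $\Lambda \subsetneq M$ is a proper minimal set of $\mathcal{W}^u$; analyzing the continuous family $t \mapsto f_t \Lambda$ of minimal sets (closed $\mathcal{W}^u$-saturated subsets of $M$), combined with transitivity and compactness, produces an $S^1$-family that yields a cross-section, hence a constant-time suspension structure for $f_t$, which contradicts mixing. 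Minimality of $\mathcal{W}^s$ follows by the symmetric argument applied to $f_{-t}$.

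The main obstacle I expect is the regularity of the Liv\v{s}ic coboundary $\tau$ in Case I, together with the detailed analysis of minimal sets of $\mathcal{W}^u$ in Case II. Liv\v{s}ic's theorem delivers only a H\"older $\tau$, and bootstrapping to the $C^1$ regularity required for $N$ to be a $C^1$ submanifold uses the smoothness of the flow along orbits and the identity $d\tau(X) = 1$. The minimal-set analysis in Case II requires carefully handling the $f_t$-orbit of $\Lambda$ and showing that the stabilizer $A = \{t : f_t \Lambda = \Lambda\}$ is a nontrivial discrete subgroup of $\reals$ (in particular, ruling out $A = \{0\}$), which relies on compactness of $M$, continuous dependence of $f_t \Lambda$ on $t$ in the Hausdorff topology, and the rigidity of minimal sets of $\mathcal{W}^u$ under Anosov dynamics.
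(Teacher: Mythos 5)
The paper cites this result directly from Plante \cite{Plante72} without reproducing a proof, so there is no internal argument to compare against. Your period-group dichotomy is a sensible organizing principle, close in spirit to the Bowen--Ruelle treatment of Axiom A flows, but two steps do not hold up as written.

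In Case I the passage from a H\"older $\tau$ to a $C^1$ one is the genuine gap. The relation $\tau(f_t x)-\tau(x)\equiv t\pmod{T_0}$ controls $\tau$ only in the flow direction; transverse to the flow you have exactly the H\"older regularity that Liv\v{s}ic's theorem delivers and nothing more, so $N=\tau^{-1}(0)$ is merely a topological cross-section. The theorem, however, asserts a compact $C^1$ codimension-one submanifold, and ``smoothness along orbits plus $d\tau(X)=1$'' does not propagate to transverse derivatives. Plante obtains the $C^1$ section by a different route, through joint integrability of $\mathcal{W}^s$ and $\mathcal{W}^u$ (this is the paper's Theorem~\ref{ji}); your construction is missing that idea. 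In Case II the mixing-via-no-continuous-eigenfunction step is sound, but the minimality claim defers precisely the hard part of Plante's argument: showing that a proper closed $\mathcal{W}^u$-saturated minimal set $\Lambda$ has a nontrivial discrete stabilizer $\{t:f_t\Lambda=\Lambda\}$ and produces a global cross-section uses local product structure and the expansion properties of $\mathcal{W}^u$ in an essential way, and does not follow just from compactness and Hausdorff continuity of $t\mapsto f_t\Lambda$. As written, the proposal identifies the right objects but does not close either step.
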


Let $x,y\in M$ such that $y\in W^s_{\delta}(x)$. There exists $0<\varepsilon' \le \varepsilon $ such that, for all $x'\in W^{cu}_{\varepsilon '} (x)$, we have $d(x', y)<\delta$, so $[x',y]$ is well defined. The stable holonomy is defined as
\begin{equation*}
	\begin{matrix}
		H^s_{x,y} :& W^{cu}_{\varepsilon'} (x)& \longrightarrow & W^{cu}_{\varepsilon} (y)  \\
		& x' & \longmapsto & [x',y].
	\end{matrix}
\end{equation*}
We say that $\mathcal{W}^s$ and $\mathcal{W}^u$ are jointly integrable if, whenever $d(x,y)<\delta$, 
\[
H_{x,y}^s ( W^u _{\varepsilon'}(x)) \subset W^u _{\varepsilon}(y).
\]
It is clear that, if $f_t$ is a constant time suspension of an Anosov diffeomorphism, then $\mathcal{W}^s$ and $\mathcal{W}^u$ are jointly integrable. In fact, Plante also shows:

\begin{theorem}\cite[Theorem 3.7]{Plante72} \label{ji}
	Let $f_t:M\to M$ be an Anosov flow such that $\mathcal{W}^s$ and $\mathcal{W}^u$ are jointly integrable and either $\dim E^s=1 $ or $\dim E^u =1$. Then $f_t$ is a constant time suspension of an Anosov diffeomorphism on a compact $C^1$ submanifold of codimension one in $M$.
\end{theorem}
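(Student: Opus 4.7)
\emph{Plan.} I would prove the theorem by integrating $E^s\oplus E^u$ to a codimension-one foliation, locating a flow-periodic leaf, and then upgrading this leaf to a compact global cross-section with constant return time.

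First, combining the local product structure with the joint integrability hypothesis, one constructs a continuous foliation $\mathcal{W}^{su}$ whose leaf through $x$ contains both $W^s(x)$ and $W^u(x)$. The local plaque can be described as
\[
W^{su}_{\mathrm{loc}}(x) \;=\; \bigcup_{y\in W^s_{\varepsilon'}(x)} H^s_{x,y}\bigl(W^u_{\varepsilon'}(x)\bigr),
\]
and joint integrability is precisely the symmetry property that makes these plaques assemble into a well-defined foliation. The splitting $TM=E^s\oplus\reals X\oplus E^u$ shows that $\mathcal{W}^{su}$ is transverse to the flow direction, while the $Df_t$-invariance of $E^s\oplus E^u$ implies that $f_t$ permutes the leaves of $\mathcal{W}^{su}$.

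Second, since an Anosov flow on a compact manifold has periodic orbits, one picks $x_0$ with $f_p(x_0)=x_0$ and lets $L_0$ be the leaf of $\mathcal{W}^{su}$ through $x_0$. Because $f_p$ permutes leaves and fixes $x_0$, necessarily $f_p(L_0)=L_0$, and $f_p|_{L_0}$ is hyperbolic at $x_0$ with respect to the induced Anosov splitting. The hypothesis $\dim E^s=1$ or $\dim E^u=1$ enters here: the one-dimensional hyperbolic direction is fully contained in $L_0$ and is uniformly contracted (or expanded) by $f_p$, which severely restricts how $L_0$ can sit inside $M$.

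The heart of the proof is to show that $L_0$ is a compact embedded submanifold. I would exploit the transverse structure of $\mathcal{W}^{su}$ given by the flow direction: the holonomy of $\mathcal{W}^{su}$ along a short flow arc through $x_0$ is controlled by the return map of $f_t$ to $L_0$, and hyperbolicity of $f_p|_{L_0}$ together with the one-dimensional stable (or unstable) direction forces this holonomy to be trivial. A careful argument then rules out accumulation of $L_0$ on itself via distinct leaves, so $L_0$ is closed in $M$ and hence compact. Once compactness is established, $\bigcup_{t\in\reals} f_t(L_0)$ is open (by transversality) and closed in $M$, so it equals $M$; the first return time to $L_0$ is continuous on $M$ and divides $p$ on $L_0$, and a short argument using the fact that every leaf is $f_t$-translate of $L_0$ shows it is constant. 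This exhibits $f_t$ as the constant-time suspension of the Anosov diffeomorphism $f_p|_{L_0}$.

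The step I expect to be genuinely hard is the compactness of $L_0$. The foliation $\mathcal{W}^{su}$ is only $C^0$ a priori and its leaf topology can be complicated; ruling out Denjoy-type accumulation or spiraling of $L_0$ toward other leaves requires combining joint integrability with the one-dimensionality of $E^s$ or $E^u$ in a non-trivial way, and I expect this to be the delicate core of Plante's original argument.
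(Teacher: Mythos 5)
The paper does not prove this statement; it quotes it as Theorem~3.7 of Plante's 1972 article, so there is no in-paper proof to compare against. Evaluating your sketch on its own terms: the skeleton — build the codimension-one foliation $\mathcal{W}^{su}$ tangent to $E^s\oplus E^u$, locate an $f_p$-invariant leaf $L_0$ through a periodic point, and deduce the suspension structure once $L_0$ is known to be a compact global cross-section — is sensible, and the last part (constancy of the return time from compactness plus transversality to the flow) is essentially correct. But the step you yourself single out as the hard one, compactness of $L_0$, is a genuine gap, not a step. Asserting that hyperbolicity of $f_p|_{L_0}$ together with one-dimensionality of $E^s$ or $E^u$ "forces this holonomy to be trivial" and that "a careful argument rules out accumulation" is not an argument: nothing you have written explains why the $\mathcal{W}^{su}$-leaf through a hyperbolic periodic point cannot be dense, accumulate on itself, or spiral onto other leaves. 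In fact, in the topologically mixing case the strong unstable manifold $W^u(x_0)$ of a periodic orbit is dense in $M$, and since $W^u(x_0)\subset L_0$ any leaf through $x_0$ would be dense and non-compact; so density is precisely the behavior you must rule out, and the local picture of $f_p|_{L_0}$ near $x_0$ says nothing about it. Joint integrability has to enter in a global, quantitative way, and your sketch does not indicate how.

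Plante's own argument takes a different route. Instead of tracking a single periodic leaf, he exploits the one-dimensionality of $E^s$ (or $E^u$) to parametrize the strong foliation and thereby produce a holonomy-invariant transverse measure for the codimension-one foliation $\mathcal{W}^{su}$; combined with his structure theory for codimension-one foliations admitting such a measure, this shows that $\mathcal{W}^{su}$ is a fibration over $S^1$ with constant return time, from which compactness of every leaf drops out simultaneously rather than for one leaf at a time. If you want to salvage the periodic-leaf strategy you would almost certainly have to rebuild that transverse-measure machinery to control the global holonomy, at which point the detour through a periodic orbit buys you nothing.
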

The Anosov diffeomoprhism appearing on the previous theorem is topologically conjugate to a hyperbolic toral automorphism \cite{Franks70}.

We now restrict to the case that $M$ is a $3$-manifold. The foliations $\mathcal{W}^s$ and $\mathcal{W}^u$ have both dimension $1$, so they can be parametrized by flows on $M$. We will choose a particular parametrization introduced by Marcus which will make later computations easier.

%como tratar el caso no orientable?

\begin{theorem} \label{parametrization}Let $f_t:M\to M$ be a transitive Anosov flow on a compact $3$-manifold $M$. Assume that $E^s$ and $E^u$ are orientable. Then there exist a constant $h>0$ and continuous flows $h^+_t:M \to M$ and $h^-_t : M\to M$ such that 
	\begin{enumerate}
		\item for every $x\in M$, the orbit $h^+_{\reals}(x)$ is the stable manifold $W^s(x)$ and the orbit $h^-_{\reals}(x)$ is the unstable manifold $W^u(x)$,
		\item for every $t,s,u\in \reals $, we have
		\[
		h^+_s \circ f_t = f_t \circ h^+_{se^{ht}} , \quad h^-_u \circ f_t = f_t \circ h^-_{u e^{-ht}} .
		\]
	\end{enumerate}
	
\end{theorem}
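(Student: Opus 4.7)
The plan is to realize the required parametrization via the conditional measures of the unique measure of maximal entropy on the invariant foliations, following Margulis and Bowen.

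First I would invoke the existence of a unique measure of maximal entropy $\mu$ for the transitive Anosov flow $f_t$ (the Bowen--Margulis measure), and set $h := h_{\mathrm{top}}(f_t) > 0$. In any flow box, $\mu$ disintegrates into a product of conditional measures $\mu^u_x$ on local unstable leaves, $\mu^s_x$ on local stable leaves, and Lebesgue measure in the flow direction, up to a continuous Radon--Nikodym factor. The features that matter for the construction are: these conditionals depend continuously on $x$ in the weak-* topology, they are atomless and have full support on their leaves, and they transform covariantly under $f_t$,
\[
(f_t)_* \mu^u_x \;=\; e^{-ht}\, \mu^u_{f_t(x)}, \qquad (f_t)_* \mu^s_x \;=\; e^{ht}\, \mu^s_{f_t(x)}.
\]
By extending each local conditional to a $\sigma$-finite measure on the whole leaf using additivity and the flow, one obtains a canonical measure on each unstable and each stable leaf.

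Using the given orientations of $E^u$ and $E^s$, I would parametrize each unstable leaf by $\mu^u_x$-length: define $h^-_u(x)$ as the unique point on $W^u(x)$, on the positive side for $u>0$ and on the negative side for $u<0$, such that the $\mu^u_x$-measure of the oriented arc from $x$ to $h^-_u(x)$ equals $|u|$; define $h^+_s$ analogously with $\mu^s_x$. Full support and non-atomicity make this well defined and continuous in $u$; continuous dependence of the conditionals on the base point gives continuity in $x$; and additivity of the measure gives the flow property $h^-_u \circ h^-_v = h^-_{u+v}$. The commutation relation then falls out of covariance: if $y = h^-_u(x)$, the $\mu^u_{f_t(x)}$-measure of the arc from $f_t(x)$ to $f_t(y)$ equals $e^{ht}$ times the $\mu^u_x$-measure of the arc from $x$ to $y$, so $f_t(h^-_u(x)) = h^-_{u e^{ht}}(f_t(x))$, which rearranges to $h^-_u \circ f_t = f_t \circ h^-_{u e^{-ht}}$. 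The stable case is symmetric with the sign of $h$ reversed.

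The main obstacle is importing the full Bowen--Margulis package for a general transitive Anosov flow: existence and uniqueness of the measure of maximal entropy, continuity of the disintegration in the base point, atomlessness, full support, and the precise scaling under $f_t$. These are classical but substantial results in the ergodic theory of hyperbolic systems. An alternative is to construct the transverse measure directly via exponential counts of unstable arcs, as in Margulis's original thesis, avoiding the general equilibrium-state machinery at the cost of extra bookkeeping; in either case one recovers the same constant $h = h_{\mathrm{top}}(f_t)$.
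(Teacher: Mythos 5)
Your proposal follows essentially the same route as the paper: you parametrize each stable and unstable leaf by a transverse measure that scales by $e^{\pm ht}$ under $f_t$, derive the flow property from additivity of the measure, and derive the commutation relation from the scaling, with $h$ the topological entropy. Your final computation $f_t\circ h^-_u = h^-_{ue^{ht}}\circ f_t$, rearranged by the substitution $v=ue^{ht}$, correctly reproduces the stated identity. This is what the paper does, citing Margulis and Marcus.

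One point worth flagging: the paper is careful to split according to Plante's alternative. Margulis's construction of the expanding/contracting conditional measures (via exponential counting of unstable arcs, which is the route underlying the ``precise scaling under $f_t$'' you invoke) uses topological mixing, and Marcus's construction of the flows $h^\pm_t$ is set up in that framework. In the non-mixing (constant-time suspension) case the paper instead lifts Sinai's expanding/contracting measures for the underlying Anosov diffeomorphism to the suspension, where they enjoy the same scaling. You phrase the needed input as ``the Bowen--Margulis package for a general transitive Anosov flow,'' which is true of the end result but glosses over the fact that the standard proof of the disintegration with continuous, atomless, fully supported conditionals and the exact $e^{\pm ht}$ cocycle is obtained differently in the two cases. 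Adding the Plante dichotomy and the Sinai/Ruelle--Sullivan lift in the suspension case, as the paper does, would close that loose end; otherwise the argument and conclusion are the same.
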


\begin{proof}
	Under the assumption that $f_t$ is topologically mixing, Margulis constructed two families of measures on the stable and the unstable manifolds which are uniformly contracted and expanded by $f_t$, as part of his construction of the measure of maximal entropy of $f_t$ \cite{Margulis70}. Later Marcus used these families of measures to define the continuous flows $h_t^+$ and $h_t^-$ with the proprerties above \cite{Marcus75}.
	
	The other possibility is that $f_t$ is a constant time suspension of an Anosov diffeomorphism $f$. Sinai constructed two families of measures on the stable and the unstable manifolds of $f$ which are uniformly contracted and expanded by $f$ \cite{Sinai68,RuelleSullivan}. These measures can be lifted to a family of measures on the stable and unstable manifolds of the suspension $f_t$ with the exact same properties as the Margulis measures. Parametrizing each foliation by the corresponding family of measures yields the two flows $h^+_t$ and $h^-_t$ with the properties above.
\end{proof}

\section{Proof of the rigidity}

\subsection{Topologically mixing Anosov flows}

%orientable hyp missing

Let $f_t, g_t$ be two $C^1$ transitive Anosov flows on a compact $3$-manifold $M$. Assume that there is a homeomorphism $\phi: M\to M$ which sends the unstable foliation $\mathcal{W}^u_f$ of $f_t$ to the unstable foliation $\mathcal{W}^u_g$ of $g_t$. In other words, for every $x\in M$, we have 
\[
\phi (W^u_f (x)) = W^u_g(\phi(x)).
\]

%\textbf{Situation.} Let $(M,f_t)$ and $(N,g_t)$ be transitive Anosov flows on compact $3$-manifolds. There exist unstable and stable horocyclic flows $h^-$ and $h^+$ with uniform parametrization, i.e.
%\[
%h^-_s \circ g_t = g_t \circ h^-_{se^{-ht}},
%\]
%where $h$ is the topological entropy of $g_t$.  
%Assume there is an orbit equivalence $\phi : M\to N$ between the unstable horocyclic flows.  

\begin{lemma}\label{local_times}
	There exist $\delta > 0$, and continuous functions $t_1, t_2, t_3 : M \times (-\delta,\delta) \to \mathbb{R}$ such that for every $x \in M$ and $t \in (-\delta, \delta)$,
	\begin{equation}\label{basic_formula}
		\phi(f_t(x)) = h^-_{t_3(x,t)} \, g_{t_2(x,t)} \, h^{+}_{t_1(x,t)} \, \phi(x),
	\end{equation}
	where $h_t^+$ and $h_t^-$ are the parametrizations of the stable and the unstable foliation of $g_t$, respectively, defined in Theorem \ref{parametrization}.
\end{lemma}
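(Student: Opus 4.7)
The plan is to invoke the local product structure of $g_t$ (Theorem \ref{local_prod}) at the pair of nearby points $\phi(x)$ and $\phi(f_t(x))$. Since $\phi$ is a homeomorphism of the compact manifold $M$ and $(x,t)\mapsto f_t(x)$ is continuous, these two points are uniformly close for $|t|$ small; bracketing them with respect to $g_t$'s product structure yields a unique intermediate point, and the stable, flow, and unstable $g$-displacements from $\phi(x)$ to this intermediate point and from there to $\phi(f_t(x))$ are precisely the sought $t_1, t_2, t_3$. Note that this argument does not yet use that $\phi$ maps $\mathcal{W}^u_f$ to $\mathcal{W}^u_g$; only that it is a homeomorphism.

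In detail, let $\delta_0$ and $\varepsilon$ be the constants of Theorem \ref{local_prod} applied to $g_t$, with the roles of stable and unstable interchanged. By uniform continuity of $(x,t)\mapsto \phi(f_t(x))$, pick $\delta > 0$ so that $d(\phi(x), \phi(f_t(x))) < \delta_0$ whenever $x \in M$ and $|t| < \delta$. For such $x$ and $t$, setting $p = \phi(x)$ and $q = \phi(f_t(x))$, the product structure provides a unique $z \in W^u_{g,\varepsilon}(q) \cap W^{cs}_{g,\varepsilon}(p)$, continuous in $(p,q)$. Writing $z = g_{t_2}\, h^+_{t_1}(p)$ in a local parametrization of $W^{cs}_{g,\varepsilon}(p)$ by (stable time, flow time), and $q = h^-_{t_3}(z)$ in a local parametrization of $W^u_{g,\varepsilon}(z)$ by unstable time, the three parameters $(t_1, t_2, t_3)$ are uniquely determined, depend continuously on $(x, t)$, and vanish at $t=0$ by uniqueness.

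The only point that genuinely requires care is that the map
\[
\Phi_p : (s, r, u) \longmapsto h^-_u\, g_r\, h^+_s(p)
\]
is a homeomorphism from a fixed neighborhood of $0 \in \reals^3$ onto a neighborhood of $p$ in $M$, with domain independent of $p \in M$, so that the three times are jointly continuous and a single $\delta$ suffices over all of $M$. This is a compactness argument using that the flows $g, h^+, h^-$ have no fixed points (a consequence of the equivariance in Theorem \ref{parametrization}, which prevents $h^{\pm}$ from being trivial along any leaf) and that the three one-dimensional directions associated to these flows are everywhere transverse. Once this uniform local trivialization is granted, the identity of the lemma is simply the statement that $(t_1(x,t),t_2(x,t),t_3(x,t)) = \Phi_{\phi(x)}^{-1}(\phi(f_t(x)))$.
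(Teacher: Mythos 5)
Your proof is correct and mirrors the paper's: both choose $\delta$ by uniform continuity so that $\phi(x)$ and $\phi(f_t(x))$ fall within the range of the local product structure of $g_t$, then read off $(t_1,t_2,t_3)$ from the bracket via the $h^{\pm}$-parametrizations of Theorem~\ref{parametrization}. The only cosmetic deviation is that you bracket with the roles of stable and unstable interchanged, taking $z \in W^u_{g,\varepsilon}(\phi(f_t(x))) \cap W^{cs}_{g,\varepsilon}(\phi(x))$ rather than $[\phi(x),\phi(f_t(x))] \in W^s_{g,\varepsilon}(\phi(x)) \cap W^{cu}_{g,\varepsilon}(\phi(f_t(x)))$ as in the paper; by uniqueness of the decomposition this yields the same triple of times, and your explicit remark about the uniform trivialization $\Phi_p$ simply unpacks what the paper attributes to ``continuity of the local product structure.''
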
  

\begin{proof}
	Let $\delta_2>0$ such that $g_t$ has local product structure for points at distance less than $\delta_2$ (Theorem \ref{local_prod}). Since $ \phi $ is uniformly continuous there exists $\delta_1>0$ such that $d(x,y)<\delta_1$ implies $d(\phi(x), \phi(y))<\delta_2$, $x,y\in M$. Finally, by uniform convergence, there exists $\delta>0$ such that, if $|t|<\delta$, then $d(f_t(x),x)<\delta_1$. 
	
	This choice of $\delta>0$ guarantees that, for every $(x,t)\in M\times (-\delta,\delta)$, the local stable manifold of $\phi(x)$ intersects the local center unstable manifold of $\phi(f_t(x))$ at the point $[\phi(x), \phi (f_t(x))]$. By Theorem \ref{parametrization}, there exists a unique time $t_1(x,t)\in \reals$ such that 
	\[ [\phi(x), \phi (f_t(x))]= h^{+}_{t_1(x,t)} \, \phi(x). \]
	
	Since $[\phi(x), \phi (f_t(x))]$ is in the center unstable manifold of $\phi(f_t(x))$, there exists a small time $t_2(x,t)$ such that $g_{t_2(x,t)} [\phi(x), \phi (f_t(x))]$ belongs to the unstable manifold of $\phi(f_t(x))$ and also a unique time $t_3(x,t)$ such that
	\[
	h^-_{t_3(x,t)} g_{t_2(x,t)} [\phi(x), \phi (f_t(x))] = \phi(f_t(x)).
	\]
	The continuity of the functions $t_1,t_2$ and $t_3$ follows from the continuity of the local product structure of $g_t$  and the continuity of the flows $f_t, g_t, h_t^+$ and $h_t^-$.
\end{proof}

%Before stating the key result, we observe that we can take local stable and unstable manifolds can be written in terms of the flows $g_t, h^+_t$ and $h^-_t$. More precisely we will consider local center stable manifolds of the form
%\[
%W^{cs}_{loc} (x) =\{ g_t h^+_s x \, | \, |t|<\varepsilon_1,\, |s|<\varepsilon_2 \},
%\]
%with $\varepsilon_1$ and $\varepsilon_2$ small enough.·

Next we state the key result.

\begin{proposition}\label{key}
%Let $f_t, g_t$ be two $C^1$ Anosov flows on a compact $3$-manifold $M$. Assume that both $E^s_g$ and $E^u_g$ are orientable. Let $\phi:M\to M$ be an orbit equivalence between the unstable foliation $\mathcal{W}^u_f$ of $f_t$ and the unstable foliation $\mathcal{W}^u_g$ of $g_t$. Assume that $g_t$ is transitive. Then,
%	\begin{enumerate}
%		\item either $\phi$ sends the center unstable foliation $\mathcal{W}^{cu}_f$ of $f_t$ to the center unstable foliation $W^{cu}_g$ of $g_t$,
%		\item or $\mathcal{W}^s_g$ and $\mathcal{W}^u_g$ are jointly integrable.
%	\end{enumerate} 
%%	If $\phi$ does not send $W^{cu}$ to $W^{cu}$, then $W^s$ and $W^u$ are jointly integrable.
Let $f_t, g_t$ be two $C^1$ Anosov flows on a compact $3$-manifold $M$. Assume that both $E^s_g$ and $E^u_g$ are orientable and assume that $g_t$ is topologically mixing. Let $\phi:M\to M$ be an equivalence between the unstable foliation $\mathcal{W}^u_f$ of $f_t$ and the unstable foliation $\mathcal{W}^u_g$ of $g_t$. Then $\phi$ sends the center unstable foliation $\mathcal{W}^{cu}_f$ to the center unstable foliation $\mathcal{W}^{cu}_g$.
\end{proposition}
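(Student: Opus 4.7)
The plan is to show that the continuous function $t_1(x,t)$ from Lemma~\ref{local_times} vanishes identically on $M \times (-\delta_0, \delta_0)$ for some small $\delta_0 > 0$. Once this holds, the formula reduces to $\phi(f_t x) = h^-_{t_3} g_{t_2} \phi(x) \in W^{cu}_g(\phi(x))$ for $|t|<\delta_0$, and combined with the hypothesis that $\phi$ sends $\mathcal{W}^u_f$-leaves to $\mathcal{W}^u_g$-leaves and an iteration in $t$, this yields $\phi(W^{cu}_f(x)) = W^{cu}_g(\phi(x))$ for every $x$. As a useful preliminary, I would note that $\mathcal{W}^u_f$ is also minimal: since $\phi$ is a homeomorphism mapping its leaves to the leaves of the minimal foliation $\mathcal{W}^u_g$ (minimality following from the mixing of $g_t$ via Theorem~\ref{alternative}), the leaves of $\mathcal{W}^u_f$ are also dense in $M$.

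Fix $\delta_0 > 0$ smaller than the $\delta$ of Lemma~\ref{local_times}, and set $Z := \{x \in M : t_1(x,t) = 0 \text{ for all } |t| \le \delta_0\}$. Closedness of $Z$ is immediate from continuity of $t_1$. The core claim is that $Z$ is open in each leaf of $\mathcal{W}^u_f$. To show it, take $x_0 \in Z$ and $y \in W^u_f(x_0)$ close to $x_0$; write $\phi(y) = h^-_u \phi(x_0)$ with $u$ small, and compute $\phi(f_t y)$ in two ways, once by Lemma~\ref{local_times} applied directly at $y$, and once by using $\phi(f_t y) \in W^u_g(\phi(f_t x_0))$ together with $\phi(f_t x_0) \in W^{cu}_g(\phi(x_0))$ (valid since $x_0 \in Z$). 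Equating the two expressions and simplifying reduces the problem to the membership $h^+_{t_1(y,t)}(h^-_u \phi(x_0)) \in W^{cu}_g(h^-_u \phi(x_0))$. Because the local-product chart $(s, r, v) \mapsto h^-_v g_r h^+_s(h^-_u \phi(x_0))$ is injective near the origin and $W^{cu}_g$ is its slice $\{s = 0\}$, this forces $t_1(y,t) = 0$ as soon as $t_1(y,t)$ is small enough; uniform continuity of $t_1$ on the compact box $M \times [-\delta_0,\delta_0]$ ensures this smallness for all $|t| \le \delta_0$ provided $y$ is close enough to $x_0$. Hence $y \in Z$, so $Z$ is leaf-open.

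Combining closedness with leaf-openness, $Z \cap L$ is clopen in each $\mathcal{W}^u_f$-leaf $L$, hence empty or all of $L$. By minimality of $\mathcal{W}^u_f$, any leaf contained in $Z$ is dense, and closedness then forces $Z = M$. The proposition then follows by iterating the local conclusion in $t$.

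The principal obstacle I expect is to establish that $Z$ is nonempty. This is the step where the mixing hypothesis on $g_t$, equivalently the non-joint-integrability of $\mathcal{W}^s_g$ and $\mathcal{W}^u_g$ ensured by Theorems~\ref{alternative} and~\ref{ji}, must intervene in an essential way. A plausible route is contradiction: assuming $t_1(x, \cdot)$ is nontrivially nonzero in every neighborhood of $0$ for every $x$, combine the flow identity $\phi(f_{s+t} x) = \phi(f_t(f_s x))$ with the Marcus commutation rules $h^\pm_u g_t = g_t h^\pm_{u e^{\pm h t}}$ to derive cocycle-type relations among $t_1, t_2, t_3$; the non-commutativity of $h^+$ and $h^-$ quantitatively records the failure of joint integrability and should, when iterated along a recurrent $f$-orbit, produce an accumulating stable-direction drift that contradicts topological recurrence. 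Making this accumulation precise, using the exponential scalings of the Marcus parametrization together with the nontrivial commutator of $h^+$ and $h^-$, seems to be the main technical hurdle.
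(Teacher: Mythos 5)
Your reduction is sound and, up to contraposition, mirrors Steps 1–2 of the paper's own proof. Setting $Z = \{x : t_1(x,\cdot)\equiv 0 \text{ on } [-\delta_0,\delta_0]\}$, you correctly observe that $Z$ is closed and saturated by $\mathcal{W}^u_f$-leaves, and that minimality of $\mathcal{W}^u_f$ (inherited from minimality of $\mathcal{W}^u_g$ via $\phi$) then forces $Z\in\{\emptyset,M\}$. In fact the leaf-saturation argument is simpler than you make it: if $x_0\in Z$ and $y\in W^u_f(x_0)$, then $\phi(y)\in W^u_g(\phi(x_0))$ and $\phi(f_t y)\in W^u_g(\phi(f_t x_0))\subset W^{cu}_g(\phi(x_0))$, so $\phi(f_t y)$ and $\phi(y)$ lie in the same center-unstable leaf, giving $t_1(y,t)=0$ outright with no need for the local-product injectivity and uniform-continuity considerations you invoke. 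This is essentially the paper's Step 1 argument, run in the other direction; the paper also needs your sign-change observation about $t_1$ (their Step 2, second part), which your reduction omits but which becomes essential later.

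The genuine gap is exactly what you flag at the end: you have no proof that $Z\ne\emptyset$ (equivalently, a contradiction from $Z=\emptyset$), and this is where all the real work lives. The paper's Step 3 is a geometric renormalization: from $Z=\emptyset$ one extracts, for each $x$, a one-parameter family of parametrized surfaces built from $t_1$ and $t_2$; one then flows them backward by $g_{-T_n}$ with $T_n = \frac{1}{h}\log(L/\alpha_n)$ chosen via the Margulis/Marcus exponential scaling so that the stable parameter is renormalized to a fixed macroscopic length $L$, while the surface's vertical thickness $t_2$ collapses because $t_2(\cdot,t)\to 0$ uniformly as $t\to0$. Passing to a subsequential limit of the flowed surfaces and intersecting with local center-stable manifolds produces an actual local surface near some $y\in\overline{g_\mathbb{R}\phi(x)}$ that is simultaneously foliated by strong stable and strong unstable segments of $g_t$, i.e.\ joint integrability at $y$; a minimality/invariance argument (Step 4) then propagates this to all of $M$ and contradicts mixing via Plante's theorems. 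Your proposed alternative — deriving cocycle relations for $t_1,t_2,t_3$, iterating along a recurrent $f$-orbit, and extracting a drift contradicting recurrence — is not worked out, and it is not evident that it can be made to close: the difficulty is converting quantitative non-commutativity of $h^+$ and $h^-$ into a contradiction, and the paper's mechanism for this is not a recurrence estimate but the production of an integral surface whose existence is forbidden. As written, the proposal is an outline of the easy half of the argument plus a statement of where the hard half should go, and does not constitute a proof of the proposition.
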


\begin{proof}
	The proof is by contradiction. We assume that $\phi$ does not send the center unstable foliation $\mathcal{W}^{cu}_f$  to the center unstable foliation $W^{cu}_g$ and show that then $\mathcal{W}^s_g$ and $\mathcal{W}^u_g$ are jointly integrable. By Plante's Theorems \ref{alternative} and \ref{ji}, this contradicts the fact that $g_t$ is topologically mixing. We observe that $f_t$ is also topologically mixing, since the unstable foliations of $g_t$ and $f_t$ are both minimal.
	
	Let $\delta_2>0$ such that $g_t$ has local product structure for points at distance less than $\delta_2$ (Theorem \ref{local_prod}). We say that $\mathcal{W}^s_g$ and $\mathcal{W}^u_g$ are jointly integrable at a point $x\in M$ if there exists a small constant $\delta_x>0$ such that 
	\begin{itemize}
		\item for every $z\in W^{cu}_{\delta_x} (x)$ and every $y\in W^{s}_{\delta_x} (x)$, we have $d(z,y)<\delta_2$ so $[z,y]$ is well defined,
		\item and, for every $y\in W^{s}_{\delta_x} (x)$, we have $H^s_{x,y} (W^u_{\delta_x}(x)) \subset W^u_{\varepsilon}(y) $.
	\end{itemize} 
	Equivalently, $\mathcal{W}^s_g$ and $\mathcal{W}^u_g$ are jointly integrable at $x\in M$ if there exists a small continuous open surface containing $x$ foliated by stable manifolds and by unstable manifolds, but we do not control the size of the surface.
	With this definition $\mathcal{W}^s_g$ and $\mathcal{W}^u_g$ are jointly integrable if and only if they are jointly integrable at every point $x\in M$. Our goal is to show that they are jointly integrable at every point.
	
	\textit{\textbf{Step 1.} Let $t\in (-\delta,\delta) $ and $x\in M$ such that $\phi(f_t(x))\in W^{cu}_{g,\varepsilon}(\phi(x))$. Then, for all $y\in M$,  $\phi(f_t(y))\in W^{cu}_{g,\varepsilon}(\phi(y))$.
	}
	
	Let $y\in M$. Since the unstable foliation of $f_t$, is minimal there exists a sequence $ x_n$ in $W^u(x)$ converging to $y$. Then $\lim \phi(x_n)= \phi (y) $ and $\lim \phi(f_t(x_n))= \phi (f_t(y)) $. For all $n\in \naturals$, $\phi(f_t(x_n))\in W^{cu}_{g,\varepsilon}(\phi(x_n))$. Taking the limit we obtain $\phi(f_t(y))\in W^{cu}_{g,\varepsilon}(\phi(y))$.
	
	\textit{\textbf{Step 2.} There exists $0<\delta_0<\delta$ such that, for every $t\in (-\delta_0,\delta_0)\setminus\{0\}$ and every $x\in M$, we have $\phi(f_t(x))\not \in W^{cu}_{g,\varepsilon}(\phi(x))$. Moreover $t_1(x,t)$ has different sign for $t\in (-\delta_0,0)$ and $t\in (0,\delta_0)$.
}
	
	We show that if the previous property is not satisfied then $\phi$ sends the center unstable foliation $\mathcal{W}^{cu}_f$  to the center unstable foliation $\mathcal{W}^{cu}_g$. 
	
	The negation of the statement yields a sequence of nonzero times $t_n$ converging to $0$ and a sequence of points $x_n$ in $M$ such that $\phi(f_{t_n} (x_n))\in W^{cu}_{g,\varepsilon}(\phi(x_n))$. Applying Step 1 for each $n\in \naturals$, we obtain $\phi(f_{t_n} (y))\in W^{cu}_{g,\varepsilon}(\phi(y))$ for all $y\in M$. Then, for every $k\in \mathbb{Z}$ and $n\in \naturals$ such that $k t_n \in (-\delta,\delta)$, we obtain $\phi(f_{k t_n} (y))\in W^{cu}_{g,\varepsilon}(\phi(y))$ for all $y\in M$. The set of points of the form $k t_n, k\in\mathbb{Z}, n\in \naturals$ is dense in $(-\delta,\delta)$ and the previous property passes to the closure, so we obtain 
	\[
	\phi(f_t(y)) \in W^{cu}_{g}(\phi(y))
	\]
	first for all $y\in M$ and $t\in (-\delta,\delta)$ and then also for all $t\in \reals$. We conclude that $\mathcal{W}^{cu}_f$ is mapped to $W^{cu}_g$ by $\phi$. This contradicts the assumption of the proof.
	
	For the second part, we also proceed by contradiction. Assume that $t_1(x,t)$ has the same sign in $t\in (-\delta_0,\delta _0)\setminus\{0\}$. Then, there exists times $s_1<0<s_2$ such that $s_2-s_1<\delta_0$ and $t_1(x,s_1)=t_1(x,s_2)$. But then $\phi(f_{s_1}(x))$ and $\phi(f_{s_2}(x))$ are in the same local center unstable manifold. In particular, $\phi(f_{s_2-s_1}(f_{s_1}(x)))\in W^u_{g,\varepsilon}(\phi(f_{s_1}(x)))$, which contradicts the first part.

	\textit{\textbf{Step 3.} For every $x \in M$, there exists $y \in \overline{g_{\mathbb{R}}(\phi(x))} $ such that $\mathcal{W}^s_g$ and $\mathcal{W}^u_g$ are integrable at $y$.	}
	
	Our goal is to construct surfaces that integrate the stable and the unstable. Fix $L>0$ a scale. 
	%tratar t_1 negativo	
	By the second step, we can construct sequences $0<\varepsilon_n , \varepsilon_n'<\delta_0$ and $\alpha_n\not =0$ such that $\varepsilon_n$ and $\varepsilon_n'$ tend to $0$ and 
	\[
	t_1 (x,\varepsilon_n) = \alpha_n , \quad t_1 (x,-\varepsilon_n') = -\alpha_n.
	\]
	In the following we assume that $\alpha_n$ is positive for simplicity, the negative case is done analogously.
	
	Let $h>0$ denote the topological entropy of $g_t$. Define
	\[
	T_n = \frac{1}{h} \log\left( \frac{L}{\alpha_n} \right).
	\]
	This number is chosen so that $g_{-T_n} h^+_{[-\alpha_n, \alpha_n]} \phi(x)$ is an interval of length $2L$ in the parameter of $h^+_t$. 
	
	Consider the parametrized surfaces
	\[
	\psi(t,s) := h^+_{t_1( h^-_s(x),t)} \, \phi (h^-_s (x)),
	\]
	\[
	\varphi(t,s) := g_{t_2( h^-_s(x),t)} \, h^+_{t_1( h^-_s(x),t)} \, \phi (h^-_s (x)).
	\]
	\begin{figure}[h]
		\centering
		\includegraphics[scale=1.5]{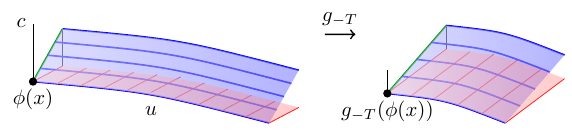}
		\caption{On the left: the red surface is the image of $\psi$ and the blue surface is the image of $\varphi$. The green segment represents the curve $t\mapsto \varphi(t,0)$. \\
			On the right: red surface, blue surface and green segment after applying $g_{-T}$ and restricting the domain. 
	} 
		\label{fig:step1}
	\end{figure}
	These surfaces are represented on the left of Figure \ref{fig:step1}.  For fixed $t\in (- \delta ,\delta )$, the curve $s\mapsto \varphi(t,s)$ runs along an unstable manifold of $g_t$. We stop at times $S_n(t),S'_n(t)>0$ such that the intervals from $\varphi(t,0)$ to $\varphi (t, S_n(t))$ and from $\varphi(t,0)$ to $\varphi (t, - S'_n(t))$ have length $L \cdot \exp(h T_n)$ in the parameter of $h^-_t$. This choices of $S_n(t)$ and $S'_n(t)$ guarantee that, for each $t\in (- \delta ,\delta )$,
	\[
	h^-_{[-L,L]} g_{-T_n} \varphi(t,0) \subset 	g_{-T_n} \varphi(\{t\}\times [-S'_n(t),S_n(t)]).
	\]
	
	Finally, define the domain
	\[
	R_n = \bigcup_{t\in[-\varepsilon'_n,\varepsilon_n]} \{t\}\times [-S'_n(t),S_n(t)].
	\]
	
	We will show that the surfaces $ g_{-T_n} \circ \psi (R_n)$ and $g_{-T_n} \circ \varphi (R_n)$ accumulate on a surface of size $2L$ foliated simultaneously by stable and unstable manifolds.  
	
	Note that $h^+_{[-L,L]}(g_{-T_n} x) \subset g_{-T_n}\psi([-\varepsilon'_n,\varepsilon_n] \times \{0\})$ and
	\[
	h^-_{[-L,L]} g_{-T_n} \varphi ([-\varepsilon'_n,\varepsilon_n] \times \{0\} ) \subset g_{-T_n} \varphi (R_n).
	\]
	
	Passing to a subsequence, we may assume $g_{-T_n} \phi(x) \to y \in M$. Then the stable segments $h^+_{[-L,L]}(g_{-T_n} \phi(x))$ converge to $h^+_{[-L,L]}(y)$, and the unstable segments $g_{-T_n} \psi (\{0\}\times [-S'_n(0),S_n(0)])$ converge to $h^-_{[-L,L]}(y)$.  
	
	Moreover, for $(s,t)\in R_n$,
	\begin{equation}\label{distance_surfaces_eng}
		d(g_{-T_n} \varphi (t,s) , g_{-T_n} \psi (t,s)) \le |g_{t_2(h_s^-(x),t)}|_\infty \le C \sup_{z\in M} |t_2(z,t)|,
	\end{equation}
	where $C$ is the supremum of the norm of the vector field generating $g_t$. Since $t_2(z,t)\to 0$ as $t\to 0$, uniformly in $z \in M$ and $t\in [ -\varepsilon'_n,\varepsilon_n]$, we deduce that the curves $g_{-T_n} \varphi ([-\varepsilon'_n,\varepsilon_n] \times \{0\} )$ also accumulate to $h^+_{[0,L]}(y)$, and the surfaces $g_{-T_n} \varphi (R_n)$ accumulate to
	\[
	S = h^-_{[-L,L]} h^+_{[-L,L]}(y).
	\]
	
	We choose the scale $L>0$ small enough such that the diameter of $S$ is less than the constant $\delta_2$ of the local product structure for the flow $g_t$. Now we show that there exists a subsurface $S'\subset S$ which is also foliated by stable segments. For every $t,s\in [-L,L]$, the local unstable manifold of $h^+_t(y)$ intersects the local center stable $W^{cs}_{loc}(h^-_s(y))$ at a unique point. Moreover, by continuity, there exists $0<\eta\le L$ such that if $s\in[-\eta,\eta]$ this intersection happens in $h^-_{[-L,L]}h^+_{t}(y)\subset S$. Define a subsurface of $S$ by 
	\[
	S' = \bigcup_{t\in[-L,L],\, s\in [-\eta,\eta]} h^-_{[-L,L]}h^+_{t}(y) \cap W^{cs}_{loc}(h^-_s(y)).
	\]
	Then $S'$ is foliated by center stable segments, obtained by intersecting $S'$ with the local center stable manifolds of $h^-_s(y)$, $s\in[-\eta,\eta]$.
	
	We now check that each of these center stable segments in $S'$ is in fact strongly stable. Fix $s\in [-\eta,\eta]$. There exists a sequence $s_n \in [-S_n(0),S_n(0)]$ such that $g_{-T_n} \varphi (0, s_n) \to h^-_s(y)$.  
	
	Observe that
	\[
	g_{-T_n}\varphi([-\varepsilon'_n,\varepsilon_n]\times \{s_n\})
	\]
	is obtained as the intersection of $g_{-T_n} \varphi([-\varepsilon'_n,\varepsilon_n]\times [-S_n(0),S_n(0)])$ with the center stable manifold of $g_{-T_n} \varphi (0, s_n)$. In the limit, this accumulates on center stable segment of $h^-_s(y)$ in $S'$. On the other hand, $g_{-T_n}\psi([-\varepsilon'_n,\varepsilon_n]\times \{s_n\})$ is a strong stable segment that converges to the same limit by (\ref{distance_surfaces_eng}). This shows that $S'$ is foliated by strong stable segments.  Hence, $S'$ is a surface transverse to $g_t$ and foliated by both stable and unstable segments.
	We have shown that there exists $y \in \overline{g_{\mathbb{R}}(\phi(x))} $ such that $\mathcal{W}^s_g$ and $\mathcal{W}^u_g$ are integrable at $y$.
	
	\textit{\textbf{Step 4.} For every $x \in M$, $\mathcal{W}^s_g$ and $\mathcal{W}^u_g$ are integrable at $x$.	
	}	

Let 
\[
U = \{ x \in M : \mathcal{W}^s_g \text{ and } \mathcal{W}^u_g \text{ are integrable at } x \}.
\]
Then $U$ is open and $g_t$-invariant. The complement $M \setminus U$ is compact and $g_t$-invariant. We have shown that $U$ is nonempty in Step 3. If $U=M$ we are done. Otherwise, consider $x \in M \setminus U$. By Step 3, there exists $y \in \overline{f_{\mathbb{R}}x} \subset M\setminus U$ such that $\mathcal{W}^s_g$ and $\mathcal{W}^s_g$ are integrable at $y$, obtaining a contradiction. We have shown that $\mathcal{W}^s_g$ and $\mathcal{W}^s_g$ are integrable, which implies that $g_t$ is a constant time suspension, contradicting that $g_t$ is topologically mixing. Hence, $\phi$ must sent the center unstable foliation $\mathcal{W}^{cu}_f$ to the center unstable foliation $\mathcal{W}^{cu}_g$.

%	Finally, since $g_t$ is transitive, this implies that $E^s$ and $E^u$ are jointly integrable at every point. %falta algo en el argumento
%	
	
%	\textbf{Argumento que sustituye el anterior:}
%	
%	\begin{enumerate}
%		\item Assume that 
%		\[
%		\phi(W^{cu}(x)) = W^{cu}(\phi(x)).
%		\]
%		Let $y \in M$. There exists $x_n \in W^{cu}(x)$ such that 
%		$\lim x_n = y$, since the central unstable manifolds is minimal.		
%		Now
%		\begin{align*}
%			\phi(W_f^{cu}(y)) &= \lim \phi\!\left(W_f^{cu}\!\left( x_n\right)\right) 
%			=  \lim W_g^{cu}(\phi(x_n)) = W_g^{cu}(\phi(y)).
%		\end{align*}
%		
%		
%		\item Show that if $\mathcal{W}_f^{cu}$ is not sent to $ \mathcal{W}_g^{cu}$, then for every $x \in M$ 
%		there exists $y \in \overline{f_{\mathbb{R}}x} $ such that $\mathcal{W}^s_g$ and $\mathcal{W}^u_g$ are integrable at $y$.
%		
%		For this we have to slightly modify the current argument, finding a point $x$ where there is transversality.
%		
%		\item Let 
%		\[
%		U = \{ x \in M : \mathcal{W}^s_g \text{ and } \mathcal{W}^u_g \text{ are integrable at } x \}.
%		\]
%		Then $U$ is open and $g_t$-invariant. We have shown that it is nonempty. The complement $M \setminus U$ is compact and $g_t$-invariant.  
%		
%		Suppose that $x \in M \setminus U$. Then there exists $y \in \overline{f_{\mathbb{R}}x} \subset M\setminus U$ such that $\mathcal{W}^s_g$ and $\mathcal{W}^s_g$ are integrable at $y$.  This is a contradiction with the fact that $M \setminus U$ is nonempty. Hence, the stable and the unstable are integrable at every point. 
%		
%
%	\end{enumerate}
	
\end{proof}

%Assuming that $g_t$ is topologically mixing, by Plante's Theorems \ref{alternative} and \ref{ji} we know that $\mathcal{W}^s_g$ and $\mathcal{W}^u_g$ are not jointly integrable, so we can rule out the second case in the dichotomy. Hence we conclude:
%
%\begin{corollary}\label{key}
%	Let $f_t, g_t$ be two $C^1$ Anosov flows on a compact $3$-manifold $M$. Assume that both $E^s_g$ and $E^u_g$ are orientable and assume that $g_t$ is topologically mixing. Let $\phi:M\to M$ be an orbit equivalence between the unstable foliation $\mathcal{W}^u_f$ of $f_t$ and the unstable foliation $\mathcal{W}^u_g$ of $g_t$. Then $\phi$ sends the center unstable foliation $\mathcal{W}^{cu}_f$ to the center unstable foliation $W^{cu}_g$.
%\end{corollary}

We can remove the orientability hypothesis on the stable and the unstable bundles.

\begin{corollary}\label{cor_key}
	Let $f_t, g_t$ be two $C^1$ Anosov flows on a compact $3$-manifold $M$. Assume that $g_t$ is topologically mixing. Let $\phi:M\to M$ be an equivalence between the unstable foliation $\mathcal{W}^u_f$ of $f_t$ and the unstable foliation $\mathcal{W}^u_g$ of $g_t$. Then $\phi$ sends the center unstable foliation $\mathcal{W}^{cu}_f$ to the center unstable foliation $\mathcal{W}^{cu}_g$.
\end{corollary}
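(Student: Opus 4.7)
The plan is to reduce the corollary to Proposition~\ref{key} by passing to a finite cover $p: \tilde M \to M$ chosen so that (i) both $E^s_g$ and $E^u_g$ lift to orientable bundles on $\tilde M$ and (ii) $\phi$ lifts to a self-homeomorphism of $\tilde M$. For (i) alone one could take the (at most $4$-to-$1$) cover corresponding to the subgroup $H_0 \le \pi_1(M)$ of index at most $4$ that simultaneously trivializes the orientations of $E^s_g$ and $E^u_g$. To also secure (ii), I would replace $H_0$ by
\[
H = \bigcap_{k \in \mathbb{Z}} \phi_*^k(H_0) \le H_0.
\]
Because $\pi_1(M)$ is finitely generated, it has only finitely many subgroups of each fixed finite index, so this intersection is in fact a finite one; hence $H$ is still a finite-index subgroup of $\pi_1(M)$, and by construction $\phi_*(H) = H$. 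Taking $\tilde M$ to be the connected cover of $M$ associated to $H$, the flows $f_t, g_t$ lift to Anosov flows $\tilde f_t, \tilde g_t$ on $\tilde M$ with orientable $E^s_{\tilde g}, E^u_{\tilde g}$, and $\phi$ lifts to a homeomorphism $\tilde \phi: \tilde M \to \tilde M$ mapping $\mathcal{W}^u_{\tilde f}$ to $\mathcal{W}^u_{\tilde g}$.

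Next I would verify that $\tilde g_t$ remains topologically mixing on $\tilde M$. Periodic orbits of $g_t$ lift to periodic orbits of $\tilde g_t$, and these lifts are dense in $\tilde M$: any open $\tilde U \subset \tilde M$ projects via $p$ (an open map) to an open set in $M$, which meets some periodic orbit of $g_t$, and this periodic orbit then lifts to one of $\tilde g_t$ passing through $\tilde U$. Hence the non-wandering set of $\tilde g_t$ equals $\tilde M$, and the basic sets of its spectral decomposition are pairwise disjoint closed subsets whose finite union is $\tilde M$; each basic set is therefore also open, so the connectedness of $\tilde M$ forces a single basic set and $\tilde g_t$ is transitive. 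Moreover, $\tilde g_t$ cannot be a constant-time suspension: otherwise $\mathcal{W}^s_{\tilde g}$ and $\mathcal{W}^u_{\tilde g}$ would be jointly integrable on $\tilde M$, and because $p$ is a local diffeomorphism this local property descends to $M$, forcing $g_t$ to be a constant-time suspension by Theorem~\ref{ji} and contradicting the topological mixing of $g_t$. Plante's alternative (Theorem~\ref{alternative}) then yields that $\tilde g_t$ is topologically mixing.

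With these preparations, Proposition~\ref{key} applies to $(\tilde f_t, \tilde g_t, \tilde \phi)$ on $\tilde M$, giving that $\tilde \phi$ maps $\mathcal{W}^{cu}_{\tilde f}$ to $\mathcal{W}^{cu}_{\tilde g}$. To descend, for any $x \in M$ and $t \in \RR$ lift $x$ to $\tilde x \in \tilde M$ and compute
\[
\phi(f_t(x)) = p\bigl(\tilde \phi(\tilde f_t(\tilde x))\bigr) \in p\bigl(\mathcal{W}^{cu}_{\tilde g}(\tilde \phi(\tilde x))\bigr) = \mathcal{W}^{cu}_g(\phi(x)),
\]
which is the desired conclusion. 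The main technical point is the simultaneous achievement of orientability on the cover and the existence of a self-lift of $\phi$; once the $\phi_*$-invariant cover is in hand, the topological mixing of $\tilde g_t$ and the descent are essentially routine.
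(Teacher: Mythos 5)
Your proof is correct and takes the same route as the paper's one-paragraph argument: pass to a finite cover on which the invariant bundles $E^s_g, E^u_g$ are orientable and invoke Proposition~\ref{key}. Beyond that, you carefully justify two points the paper treats as immediate --- that the cover can be chosen $\phi_*$-invariant so that $\phi$ actually lifts (via the finite intersection $H = \bigcap_k \phi_*^k(H_0)$, which works because $H_0$ is normal, automorphisms preserve index, and a finitely generated group has only finitely many subgroups of a given finite index), and that the lifted flow $\tilde g_t$ remains topologically mixing (transitivity from dense periodic orbits plus the spectral decomposition, ruling out the suspension case by descending joint integrability, then Plante's alternative) --- both of which are genuine gaps in the paper's terse presentation.
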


\begin{proof}
	There exists a finite cover $\tilde M$ of $M$ where $E^s_g$ and $E^u_g$ are lifted to orientable bundles. The lift of the map $\phi$ to $\tilde M$ is in the hypothesis of Proposition \ref{key}: it is an equivalence between the unstable foliations of two topologically mixing Anosov flows. Then the lift of $\phi$ is an equivalence between the two center unstable foliations, which immediately implies that $\phi $ maps $\mathcal{W}^{cu}_f$ to $\mathcal{W}^{cu}_g$.
\end{proof}

The rest of the proof proceeds exactly along the same steps as in Abe’s article \cite{Abe95}, but we will reproduce it for completeness.

\begin{lemma}Under the hypothesis of Proposition \ref{key} or Corollary \ref{cor_key}, there exists a constant $\lambda>0$ such that for all $x\in M$ and all $t\in \reals$,
	\[
	\phi( W^u_f ( f_t (x)) ) = g_{\lambda t} ( W^u_g ( \phi (x))).
	\]
\end{lemma}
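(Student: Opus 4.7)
The plan is to construct a continuous ``time-rescaling'' cocycle $\tau\colon M\times\mathbb{R}\to\mathbb{R}$ measuring how $\phi$ shifts the flow direction inside center unstable leaves, show it is additive and independent of the base point, and then read off the conclusion from the resulting Cauchy equation.

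By Corollary \ref{cor_key}, $\phi$ maps $\mathcal{W}^{cu}_f$ to $\mathcal{W}^{cu}_g$, so $\phi(f_t(x))\in W^{cu}_g(\phi(x))$ for every $x$ and $t$. Inside the two-dimensional leaf $W^{cu}_g(\phi(x))$ the flow $g_s$ is transverse to $\mathcal{W}^u_g$, giving unique local coordinates $(s,u)\mapsto g_s h^-_u$. Lifting the continuous curve $t\mapsto\phi(f_t(x))$ to the universal cover of this leaf (to handle a possibly periodic $g$-orbit of $\phi(x)$) and reading off the flow coordinate with $\tau(x,0)=0$ yields a continuous function $\tau\colon M\times\mathbb{R}\to\mathbb{R}$ satisfying
\[
\phi\bigl(W^u_f(f_t(x))\bigr) \;=\; g_{\tau(x,t)}\bigl(W^u_g(\phi(x))\bigr).
\]
The lemma amounts to showing $\tau(x,t)=\lambda t$ for some constant $\lambda>0$.

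Next I would verify two structural properties of $\tau$. The cocycle identity $\tau(x,t+s)=\tau(x,t)+\tau(f_t(x),s)$ follows from applying $\phi$ to $f_{t+s}(x)=f_s(f_t(x))$: both sides lie in the common leaf $W^{cu}_g(\phi(x))=W^{cu}_g(\phi(f_t(x)))$, and comparing flow coordinates of the two unstable leaves via the continuous lift gives the identity. Also, $\tau$ is constant on $\mathcal{W}^u_f$-leaves: if $y\in W^u_f(x)$, then $\phi(y)\in W^u_g(\phi(x))$ and $f_t(y)\in W^u_f(f_t(x))$, hence $W^u_g(\phi(f_t(y)))=W^u_g(\phi(f_t(x)))=g_{\tau(x,t)}W^u_g(\phi(y))$; transversality of $g_s$ to $\mathcal{W}^u_g$ and the normalization at $t=0$ then force $\tau(y,t)=\tau(x,t)$. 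Since $g_t$ is topologically mixing, Plante's Theorem \ref{alternative} gives minimality of $\mathcal{W}^u_g$ and hence of $\mathcal{W}^u_f$; approximating any $y\in M$ by a sequence $x_n\in W^u_f(x)$ and using continuity of $\tau$ in the base point shows $\tau(x,t)=\tau(t)$ depends only on $t$. The cocycle becomes $\tau(t+s)=\tau(t)+\tau(s)$, and continuity gives $\tau(t)=\lambda t$.

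The main obstacle I expect is verifying $\lambda>0$. Non-vanishing is straightforward: if $\lambda=0$, then $\phi(f_t(x))\in W^u_g(\phi(x))$ for every $t$, which forces $\phi(W^{cu}_f(x))=\bigcup_t W^u_g(\phi(f_t(x)))=W^u_g(\phi(x))$, a dimension contradiction with $\phi(W^{cu}_f(x))=W^{cu}_g(\phi(x))$. For the sign, since $\phi$ now preserves the center unstable foliation the decomposition of Lemma \ref{local_times} specializes to $\phi(f_t(x))=h^-_{t_3(x,t)}g_{t_2(x,t)}\phi(x)$ with $t_1\equiv 0$, so $\tau(x,t)=t_2(x,t)$; its sign for small $t>0$ is constant on the connected set $M\times(0,\delta_0)$ by continuity. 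A negative value would amount to $\phi$ reversing the flow direction within every $W^{cu}_g$-leaf, which should conflict with $\phi$ preserving the forward-expanding direction of the unstable foliation. Making this rigorous, for instance via a periodic orbit argument (each $f$-periodic orbit must map to a $g$-periodic orbit, and a comparison of winding numbers pins down the sign) or via an orientation/holonomy computation in a flow box, is the step I expect to require the most care.
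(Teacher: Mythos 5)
Your argument for the identity $\phi(W^u_f(f_t(x)))=g_{\lambda t}(W^u_g(\phi(x)))$ with $\lambda$ constant is essentially the paper's proof, just packaged differently: instead of your abstract flow-coordinate $\tau(x,t)$ defined through a lift of the curve $t\mapsto\phi(f_t(x))$ to the universal cover of the leaf, the paper works directly with the explicit local decomposition $\phi(f_t(x))=h^-_{t_3(x,t)}\,g_{t_2(x,t)}\,\phi(x)$ from Lemma \ref{local_times} (after observing that Proposition~\ref{key} forces $t_1\equiv 0$), so $t_2$ plays the role of your $\tau$ for $|t|<\delta$. The chain ``constant along $\mathcal{W}^u_f$-leaves $\Rightarrow$ (by minimality) independent of $x$ $\Rightarrow$ additive $\Rightarrow$ (by continuity) linear'' is identical, and the paper then propagates \eqref{eq3} from $(-\delta,\delta)$ to all of $\mathbb{R}$ by composing over $n$ small steps. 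Your extra care with the universal cover of a possibly periodic leaf is a legitimate precaution, though it is avoided in the paper by only ever using the decomposition for $|t|<\delta$, where no monodromy issue can arise. Your dimension argument ruling out $\lambda=0$ is correct and is a nice way of saying what the paper leaves implicit.

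The one genuine gap is the sign of $\lambda$, which you explicitly flag as requiring more care and leave as a sketch (periodic orbits/winding numbers, or an orientation/holonomy computation). The paper settles it in one sentence: the identity $\phi(W^u_f(f_t(x)))=g_{\lambda t}(W^u_g(\phi(x)))$ shows that $\phi$ intertwines the action of $f_t$ on the set of $\mathcal{W}^u_f$-leaves inside a center-unstable leaf with the action of $g_{\lambda t}$ on the $\mathcal{W}^u_g$-leaves, and since $f_t$ ($t>0$) uniformly expands $\mathcal{W}^u_f$-arcs while a homeomorphism of the compact manifold $M$ sending leaves to leaves cannot carry expansion into contraction, $g_{\lambda t}$ ($t>0$) must also expand $\mathcal{W}^u_g$, forcing $\lambda>0$. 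Your periodic-orbit idea does also work (apply the identity to a point $x$ with $f_P(x)=x$: then $g_{\lambda P}$ fixes the leaf $W^u_g(\phi(x))$, and Anosov expansivity along that leaf forces $\phi(x)$ to be $g$-periodic with positive period $\lambda P$), but it is heavier machinery than needed. So: same route as the paper, with the positivity step left incomplete where the paper relies on a short monotonicity-of-expansion observation.
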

\begin{proof}
	Since $\phi$ sends the center unstable foliation of $f_t$ to the center unstable foliation of $g_t$, we know that $\phi(f_t(x))$ belongs to the center unstable manifold of $g_t$ at $\phi(x)$ for all $x\in M$ and $t\in \reals$. In particular, if $t\in (-\delta, \delta)$, (\ref{basic_formula}) holds and the time $t_1(x,t)=0$ for all $x\in M$. So for all $(x,t)\in M \times (-\delta, \delta)$, we have
	\[
	\phi(f_t(x)) = h^-_{t_3(x,t)} \, g_{t_2(x,t)}  \, \phi(x).
	\]
	Now it is immediate that for, fixed $t\in (-\delta,\delta)$, the function $x\mapsto t_2(x,t)$ is constant along the unstable manifolds ${W}^u_f(x)$ of $f_t$. Recall that the fact that $g_t$ is topologically mixing implies that the foliation $\mathcal{W}_g^u$ is minimal, and so is the foliation $\mathcal{W}^u_f$ because they are equivalent. We deduce that $t_2(x,t)$ is a constant function of $x$, hence there exists a continuous map $\bar t_2 : (-\delta, \delta) \to \reals $ such that $t_2(x,t) = \bar t_2(t)$. 
	
	Given $t,t'\in (-\delta,\delta)$ such that $t+t'\in (-\delta, \delta)$, for any $x\in M$ we have
	\[
	\phi(f_{t'}(x))\in W^u ( g_{\bar t_2 ( t' )} \phi(x)) , \quad \phi(f_{t+t'}(x)) \in W^u ( g_{\bar t_2 ( t+t' )} \phi(x)).
	\]
	But then also
	\[
	\phi(f_{t}(f_{t'}(x)))\in W^u ( g_{\bar t_2 ( t )} \phi(f_{t'}(x) )) = W^u ( g_{\bar t_2 ( t ) + \bar t_2 (t')} \phi(x )).
	\]
	We deduce that $\bar t_2 (t+t') =\bar t_2 (t) +\bar t_2 (t')$ (see Figure \ref{fig:step2}). 
\begin{figure}[h]
	\centering
	\includegraphics{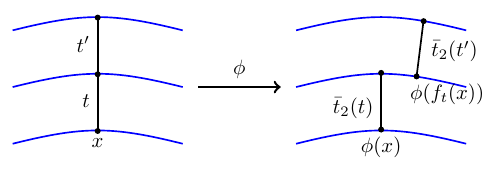}
	\caption{Additivity of $\bar t_2$.}
	\label{fig:step2}
\end{figure}
	Since $\bar t_2$ is continuous, in fact it must be linear: there exists $\lambda\in \reals $ such that $\bar t_2 (t) = \lambda t$ for all $t\in (-\delta,\delta)$. This shows that, for all $x\in M$ and all $t\in (-\delta,\delta)$,
	\begin{equation}\label{eq3}
		W^u_g (\phi(f_t(x))) = g_{\lambda t }  (W^u_g (\phi(x))).
	\end{equation}
	Given $t\in \reals$, considering a number $n\in \mathbb{N}$ such that $t/n\in (-\delta,\delta)$ and applying the previous line to the points $f_{kt/n}(x), \, 0\le k\le n-1$ and the time $t/n$, allows to extend equality (\ref{eq3}) for all $t\in \reals$. 
	
	Finally, since $f_t$ expands the unstable foliation $\mathcal{W}^u_f$, $g_{\lambda t}$ must expand the foliation $\mathcal{W}^u_g$, so $\lambda$ is positive. 
\end{proof}

We finally construct a topological conjugacy between the flows $f_t$ and $g_{\lambda t}$.
For $T\ge0 $, consider the maps $\phi_T: M\to M$ defined by 
\[
\phi_T = g_{-\lambda T} \circ \phi \circ  f_T.
\]
It is clear that each map $\phi_T$ is a homeomorphism that sends $W^u_f(x)$ to $W^u_g(\phi(x))$ for every $x\in M$.

\begin{lemma}
	The maps $\phi_T$ converge uniformly when $T\to \infty$ to a homeomorphism $\psi:M\to M$ that satisfies  $\psi \circ f_t = g_{\lambda t} \circ \psi$ for all $t\in \reals$.
\end{lemma}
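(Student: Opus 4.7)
The plan is to parametrize the maps $\phi_T$ by a scalar cocycle. Since the previous lemma gives $\phi_T(x) \in W^u_g(\phi(x))$ for all $T \geq 0$ and all $x \in M$, we may define a continuous function $u_T : M \to \reals$ by the relation $\phi_T(x) = h^-_{u_T(x)}(\phi(x))$; the uniform convergence of $\phi_T$ then reduces to the uniform convergence of the scalar cocycle $u_T$.

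The first step will be to derive the identity
\[
u_{T+s}(x) = u_T(x) + u_s(f_T(x))\, e^{-h\lambda T}, \qquad T, s \geq 0,
\]
by unwinding the definition of $\phi_{T+s}$ and using the commutation relation $g_{-\lambda T} \circ h^-_\alpha = h^-_{\alpha e^{-h\lambda T}} \circ g_{-\lambda T}$ from Theorem \ref{parametrization}. Next, applying Lemma \ref{local_times} together with the facts $t_1 \equiv 0$ and $t_2(x,t) = \lambda t$ established in the proof of the previous lemma, the formula $u_s(x) = t_3(x,s)\, e^{-h\lambda s}$ holds for $|s| < \delta$; by continuity of $t_3$ on the compact set $M \times [-\delta/2,\delta/2]$ this gives a uniform bound $|u_s(x)| \leq C_1$ for $s \in [0,\delta/2]$.

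The key estimate will be to iterate the cocycle at steps of size $\delta/2$: telescoping yields
\[
|u_T(x)| \leq C_1 \sum_{k=0}^{\infty} e^{-h\lambda k\delta/2} =: C_2 < \infty
\]
uniformly in $T$ and $x$. Substituting this uniform bound back into the cocycle gives $|u_{T+s}(x) - u_T(x)| \leq C_2\, e^{-h\lambda T}$, so $u_T$ is uniformly Cauchy and converges uniformly to some continuous $u_\infty$. We define $\psi(x) := h^-_{u_\infty(x)}(\phi(x))$ and obtain $\phi_T \to \psi$ uniformly; the conjugacy $\psi \circ f_t = g_{\lambda t} \circ \psi$ follows by passing to the limit in the elementary identity $\phi_T \circ f_t = g_{\lambda t} \circ \phi_{T+t}$.

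The remaining step, and the one we expect to need the most care, is to show that $\psi$ is actually a homeomorphism rather than merely a continuous self-map. The plan is to apply the whole construction symmetrically to $\phi^{-1}$, which is itself an equivalence of unstable foliations between $\mathcal{W}^u_g$ and $\mathcal{W}^u_f$; since $f_t$ is also topologically mixing (its unstable foliation is minimal), all previous results apply with $f$ and $g$ exchanged and constant $1/\lambda$. The analogous maps $\tilde\phi_T := f_{-T} \circ \phi^{-1} \circ g_{\lambda T}$ thus converge uniformly to a continuous $\tilde\psi$ with $\tilde\psi \circ g_{\lambda t} = f_t \circ \tilde\psi$. Because $\tilde\phi_T \circ \phi_T = \mathrm{id}_M$ identically in $T$, the uniform continuity of $\tilde\psi$ on the compact space $M$ together with the two uniform convergences yields $\tilde\psi \circ \psi = \mathrm{id}_M$, and symmetrically $\psi \circ \tilde\psi = \mathrm{id}_M$. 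Hence $\psi$ is a homeomorphism with continuous inverse $\tilde\psi$. The main technical obstacle is the uniform boundedness of $u_T$, which hinges crucially on the exponential contraction factor $e^{-h\lambda T}$ produced by $g_{-\lambda T}$ acting on the Marcus parameter of $\mathcal{W}^u_g$.
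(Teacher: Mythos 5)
Your proof is correct and the main line of argument — writing $\phi_T(x) = h^-_{u_T(x)}(\phi(x))$, establishing the cocycle identity $u_{T+s}(x) = u_T(x) + u_s(f_T(x))\,e^{-h\lambda T}$, and deducing uniform Cauchy convergence — is the same as the paper's. (Your cocycle has the correct exponent $e^{-h\lambda T}$ from Theorem \ref{parametrization}; the paper writes $e^{-\lambda T_1}$, evidently absorbing or dropping the constant $h$, which is harmless since $h\lambda>0$.) Your bound $|u_s(x)|\le C_1$ for $s\in[0,\delta/2]$ via $u_s=t_3(\cdot,s)e^{-h\lambda s}$ and your intermediate step of showing $u_T$ is uniformly bounded before extracting the Cauchy estimate are small variants of the paper's direct estimate via $\tau_0=\sup\{\tau(x,T):T\in[0,1]\}$; both work.

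Where you genuinely diverge is the verification that $\psi$ is a homeomorphism. The paper argues surjectivity from the fact that each $\phi_T$ is a homeomorphism of the compact manifold, and then proves injectivity dynamically: $\psi(x)=\psi(y)$ forces $f_t(x)$ and $f_t(y)$ to stay at bounded unstable distance for all $t\in\reals$, which by expansion of $\mathcal{W}^u_f$ forces $x=y$. You instead exploit the symmetry of the hypotheses: since $\phi^{-1}$ is an equivalence from $\mathcal{W}^u_g$ to $\mathcal{W}^u_f$ and $f_t$ is also topologically mixing (its unstable foliation is minimal by the previous lemma, so it cannot be a constant-time suspension by Plante's alternative), the whole construction applies with roles reversed and scaling $1/\lambda$, producing $\tilde\psi=\lim_T f_{-T}\circ\phi^{-1}\circ g_{\lambda T}$. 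From the identity $\tilde\phi_T\circ\phi_T=\mathrm{id}_M$ for all $T$, uniform convergence of both families and uniform continuity of $\tilde\psi$ give $\tilde\psi\circ\psi=\mathrm{id}_M$, and symmetrically $\psi\circ\tilde\psi=\mathrm{id}_M$. This is a clean alternative: it produces the continuous inverse explicitly and sidesteps both the surjectivity-of-a-limit-of-homeomorphisms point and the expansion argument, at the cost of having to observe that the mixing hypothesis transfers to $f_t$.
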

\begin{proof}
	We observe again that $\phi_T(x)$ belongs to $W^u_g(\phi(x))$ for every $x\in M$ and $T\ge0$, so there exists $\tau (x, T)\in \reals $ such that $\phi_T (x) = h^-_{\tau(x,T) }(\phi(x) )$. Moreover the map $(x,T) \to \tau (x,T)$ is continuous. Consider 
	\[
	\tau_0 = \sup \{ \tau(x, T) : x\in M,\, T\in [0,1]\}.
	\]
	We will show that $\tau(x,T)$ are Cauchy when $T\to \infty$ uniformly in $x\in M$. A short computation shows that $\tau$ satisfies the property
	\[
	\tau(x,T_1+T_2) = \tau (x,T_1) + e^ {-\lambda T_1 } \tau (f_{T_1} (x), T_2 ) .
	\]
	If $T\ge T'\ge 0$ and $T-T'\le 1$, then
	\[
	|\tau(x,T) - \tau(x,T')| = e^ {-\lambda T' } |\tau (f_{T'} (x), T-T' ) |\le e^ {-\lambda T' } \tau_0.
	\]
	Therefore, for any $T\ge T'\ge 0$, 
	\[
	|\tau(x,T) - \tau(x,T')| \le  \sum_{k= \lfloor T' \rfloor }^{\lfloor T \rfloor} e^ {-\lambda k } \tau_0
	\le \frac{e^ {-\lambda \lfloor T' \rfloor } \tau_0}{1- e^ {-\lambda }} ,
	\]
	which shows that $\tau$ is uniformly Cauchy. Let $\tau (x) $ be the limit of $\tau(x,T)$ when $T\to +\infty$. It is clear that $\phi_T(x)$ converges to $\psi (x) = h^-_{\tau(x)} (\phi(x))$ uniformly in $x\in M$. 
	
	Given $t\in \reals$, if $T\ge0$ is big enough such that $T+t$ is positive, we have
	\[
	\phi_T \circ f_t = g_{-\lambda T} \circ \phi \circ f_{T+t} = g_{\lambda t} \circ \phi_{T+t}.
	\]
	Taking the limit when $T\to +\infty$ we obtain $\psi \circ f_t = g_{\lambda t} \circ \psi$.
	
	We finally prove that $\psi$ is a homeomorphism. Notice that $\phi_T$ is an homeomorphism for all $T\ge0$. This already implies that $\psi$ is surjective. For injectivity, assume that there are points $x,y\in M$ such that $\psi(x)=\psi(y)$. Applying the conjugacy property we have 
	\[
	\psi(f_t(x))=g_{\lambda t} \psi(x)= g_{\lambda t} \psi(y) = \psi (f_t(y))
	\] for all $t\in \reals$. From the definition of $\psi$ we obtain
	\[
	\phi(f_t(y)) = h^u_{\tau(f_t(x))-\tau(f_t(y))} \phi(f_t(x)) 
	\]
	The quantity $\tau(f_t(x))-\tau(f_t(y))$ is bounded for $t\in \reals$. Applying the inverse of $\phi$, we deduce that $x$ and $y$ are in the same unstable manifold of $\mathcal{W}^u_f$ and that their orbits $f_t(x)$ and $f_t(y)$ remain at bounded unstable distance for all $t\in \reals$. This can only happen if $x=y$ because $\mathcal{W}^u_f$ is expanding.  
	Since $\psi $ is continuous and bijective and $M$ a compact manifold, $\psi $ is automatically a homeomorphism.

\end{proof}

\subsection{Constant time suspensions of Anosov diffeomorphisms}

%This case is probably not surprising for specialists. 

As in the previous section, let $f_t, g_t$ be two $C^1$ transitive Anosov flows on a compact $3$-manifold $M$. Assume that there is a homeomorphism $\phi: M\to M$ which sends the unstable foliation $\mathcal{W}^u_f$ of $f_t$ to the unstable foliation $\mathcal{W}^u_g$ of $g_t$.

Assume that $f_t$ is a constant time suspension of an Anosov diffeomorphism. 
%Since $\mathcal{W}_g^u$ is not minimal, $\mathcal{W}_f^u$ is neither and $f_t$ is also a constant time suspension of an Anosov differomorphism. 
Plante shows that for any point $x\in M$, $S=\overline{W^u_f(x)}$ is a $C^1$ section of the flow $f_t$. In fact there exists a first return time $T>0$ such that $f_T S= S$ and $F=f_T$ restricted to $S$ is an Anosov diffeomorphism. Moreover $M$ is a bundle over the circle $S^1$ with monodromy map $F:S\to S$. 	

The map $\phi$ sends $S$ to a corresponding section $S' =\overline{W^u_g(\phi(x))}$ of $g_t$ and gives $M$ another structure of bundle over the circle. The monodromy map is conjugated to $F$ by $\phi$ but it also must be equal to $g_{T'}$ for some $T'\not = 0$. Since $\phi $ maps the unstable foliation of $f_t$ to the unstable foliation of $g_t$, then $T'$ must be positive. Hence $f_T$ is topologically conjugated to $g_{T'}$ and this property passes to the supensions flows $f_t$ and $g_{\lambda t}$ with $\lambda $ given by $T'/T>0$.

	\bibliographystyle{alpha}
	\bibliography{general_library}
\end{document}